\documentclass{head/cernrep}

\usepackage{bbm}
\usepackage{xcolor}
\usepackage{mathrsfs}
\usepackage{amsthm}
\usepackage{natbib}
\usepackage{mathtools}
\usepackage{enumerate}
\usepackage{hyperref}
\usepackage{comment}
\usepackage{ulem}
\usepackage{algorithm}
\usepackage{algorithmic}

\newcommand{\dif}{\mathrm{d}}
\newcommand{\bP}{\mathbb{P}}
\newcommand{\bE}{\mathbb{E}}

\newcommand{\bC}{\mathbb{C}}
\newcommand{\bN}{\mathbb{N}}

\newcommand{\ind}{\mathbf{1}}
\newcommand{\supp}{\mathrm{supp}}

\newcommand{\cF}{\mathcal{F}}
\newcommand{\cG}{\mathcal{G}}

\theoremstyle{remark}
\newtheorem{remark}{Remark}[section]
\theoremstyle{plain}

\theoremstyle{plain}
\newtheorem{lemma}{Lemma}[section]
\theoremstyle{plain}
\newtheorem{theorem}{Theorem}[section]
\theoremstyle{plain}
\newtheorem{corollary}{Corollary}[section]
\theoremstyle{definition}

\begin{document}

\title{Fast computation and theoretical guarantees for the NPMLE in exponential family mixtures}
\author{Yan Zhang}
\institute{University of Toronto}

\begin{abstract}
This work makes two advances in the study of the (approximate) nonparametric maximum likelihood estimator (NPMLE) for exponential family mixture models. First, we develop a data-compression strategy that reduces the cost of repeated likelihood evaluations in NPMLE computation to polylogarithmic order in the sample size. Second, we show that, for a broad class of approximate NPMLEs, the resulting marginal density estimator attains an almost parametric rate of convergence.
\end{abstract}

\maketitle

\section{Introduction}

The nonparametric maximum likelihood estimator (NPMLE) has recently emerged as a standard 
tool in modern data analysis, playing a central role in methods for principal components analysis \citep{zhong2022empirical}, hierarchical linear modeling \citep{soloff2025multivariate}, and empirical partially Bayes multiple testing \citep{ignatiadis2025empirical}; see also \cite{koenker_gu_2026}. These advances rest on two complementary developments: a long line of work on the statistical properties of the NPMLE in mixture models, from \citet{lindsay1983geometry1,lindsay1983geometry2} to \citet{zhang2009generalized,saha2020nonparametric}, and substantial progress in computational methods and software \citep{koenker2014convex,zhang2024efficient,yan2024learning}. In this paper, we push this line of work further by studying the NPMLE for general exponential family mixtures. We propose a generic computational approach that substantially improves efficiency, and we develop a unified theoretical framework to better understand the strong statistical performance of the NPMLE.

In many applications, the data are naturally modeled as arising from mixtures of some parametric family \citep{lindsay1995mixture}. Let \(\{p_\theta : \theta \in \Theta\}\) be a family of densities on \(\mathbb{R}\), defined with respect to a common reference measure \(\mu\), where \(\Theta \subseteq \mathbb{R}\) is the parameter space. For a mixing distribution \(g\) on \(\Theta\), the corresponding mixture density is
\[
f_g(x) \coloneqq \int_{\Theta} p_\theta(x)\,\mathrm{d}g(\theta).
\]
Suppose we observe an i.i.d.\ sample \(X_1,\ldots,X_n \sim f_{g_0}\) for some unknown mixing distribution \(g_0\), and the goal is to recover certain features of \(g_0\). The NPMLE treats \(g_0\) as an unknown probability distribution supported on a known subset \(\Theta_0 \subseteq \Theta\), and estimates it by any maximizer
\begin{equation}\label{eq:NPMLE}
\hat g \in \arg\sup_{g\in\mathcal G}\,\ell_n(f_g),
\end{equation}
where
\begin{equation}\label{eq:mixing}
\mathcal G \coloneqq \{g : \mathrm{supp}(g)\subseteq \Theta_0\},
\end{equation}
and \(\ell_n(f) \coloneqq \sum_{i=1}^n \log f(X_i)\) denotes the log-likelihood. 

Because \(\Theta_0\) often contains infinitely many points, the exact NPMLE \(\hat g\) is generally computationally intractable. A variety of methods have been proposed to approximate it, including the EM algorithm \citep{jiang2009general}, interior-point methods \citep{koenker2014convex}, and the Frank--Wolfe algorithm \citep{feng2018approximate}; see also \citet{yan2024learning} and \citet{zhang2024efficient} for more recent computational developments in the multivariate setting. A common feature of all these approaches is that the log-likelihood \(\ell_n(f)\), or its dual formulation, must be evaluated repeatedly. Consequently, for important models such as Gaussian mixtures \citep{zhang2009generalized} and chi-square mixtures \citep{ignatiadis2025empirical}, the computational cost of each iteration is at least linear in \(n\), making these methods burdensome for large-scale problems\footnote{For example, Section 1.3 of \citet{ignatiadis2025empirical} analyzes a methylation dataset with \(n=439{,}918\).}. An important exception arises in many discrete models, such as those in which \(p_\theta\) is Poisson or geometric. Indeed, if \(\mu\) is the counting measure on the natural numbers \(\mathbb{N}\) (including zero), then
\begin{equation}\label{eq:trick}
\sum_{i=1}^n \log f(X_i)=\sum_{k=0}^{|X|_n} N_k \log f(k),
\end{equation}
where \(|X|_n \coloneqq \max_{i\in[n]} |X_i|\), \(N_k \coloneqq \#\{i\in[n]: X_i = k\}\), and \([n] \coloneqq \{1,\dots,n\}\). Consequently, when \(f_{g_0}\) is light-tailed, \(|X|_n\) may grow only logarithmically with \(n\), substantially reducing the cost of evaluating \(\ell_n(f)\). The first goal of this work is to extend this computational advantage beyond the discrete setting to more general models, in which the observations \(X_1,\dots,X_n\) are typically distinct.

Once the NPMLE \(\hat g\), or an approximation thereof, has been computed, a central theoretical question is how accurately the plug-in estimator \(f_{\hat g}\) approximates the true density \(f_{g_0}\). The existing literature is largely model-specific, with most results focusing on Poisson mixtures \citep{lambert1984asymptotic,shen2022empirical,jana2025optimal} and Gaussian location mixtures \citep{ghosal2001entropies,jiang2009general,saha2020nonparametric,soloff2025multivariate}. More recently, theoretical guarantees have also been established for scaled chi-square mixtures \citep{ignatiadis2025empirical}. 
A common feature of these results is that, when \(f_{g_0}\) has sufficiently light tails, \(f_{\hat g}\) converges to \(f_{g_0}\) at a nearly parametric rate, up to logarithmic factors, thereby illustrating a broad adaptivity property of the NPMLE. Beyond these model-by-model analyses, a unified theory is currently available only for a class of discrete models known as mixtures of power series distributions \citep{balabdaoui2025parametric}. Since these models are supported on \(\mathbb N\), they also automatically enjoy the computational advantages described above. Our second goal in this work is to develop a unified theoretical framework for the nearly parametric convergence of the (approximate) NPMLE in marginal density estimation over exponential family mixtures.

\section{Main results}

Following \citet{lindsay1983geometry1,lindsay1983geometry2}, we work with the following exponential family mixture model.

\begin{enumerate}
\renewcommand{\theenumi}{(EF)}
\renewcommand{\labelenumi}{\theenumi}
\item \label{(EF)}
Let \(p_0\) be a base density on \(\mathbb{R}\) with respect to a dominating measure \(\mu\). Define its moment generating function \(Z\) and cumulant generating function \(\kappa\) by
\[
Z(\theta)\coloneqq \int \exp(\theta x)\, p_0(x)\,\mathrm{d}\mu(x),
\qquad
\kappa(\theta)\coloneqq \log Z(\theta),
\]
for \(\theta\) in the canonical parameter space
\[
\Theta \coloneqq \{\theta \in \mathbb{R} : Z(\theta) < \infty\}.
\]
The corresponding exponential family is given by
\[
p_\theta(x)\coloneqq \exp\bigl(\theta x-\kappa(\theta)\bigr)\,p_0(x),
\qquad \theta\in\Theta.
\]
We assume that \(X_1,\dots,X_n\) are i.i.d. observations from \(f_{g_0}\), and that
\[
\supp(g_0)\subseteq \Theta_0 \coloneqq \{\theta\in\mathbb{R}: |\theta|\le M\}\subseteq \Theta
\]
for some known constant \(M\in(0,\infty]\).
\end{enumerate}

Notable examples of the exponential family in \ref{(EF)} include the following. 

\begin{enumerate}
\renewcommand{\theenumi}{(SC)}
\renewcommand{\labelenumi}{\theenumi}
\item \label{(SC)} \textit{Scaled chi-square mixture:} \(M<\infty\), and
\[
p_0(x)=\Bigl(\frac{\nu}{2\sigma^2}\Bigr)^{\nu/2}\frac{x^{\nu/2-1}}{\Gamma(\nu/2)}
\exp\Bigl(-\frac{\nu x}{2\sigma^2}\Bigr),
\]
for some fixed integer \(\nu\ge 2\) and constant \(\sigma^2>0\).

\renewcommand{\theenumi}{(GL)}
\renewcommand{\labelenumi}{\theenumi}
\item \label{(GL)} \textit{Gaussian location mixture:} \(M=\infty\), and
\[
p_0(x)=\frac{1}{\sqrt{2\pi}}\exp\Bigl(-\frac{x^2}{2}\Bigr).
\]
\end{enumerate}

To our knowledge, the study of the NPMLE for model~\ref{(SC)} was initiated only recently by \citet{ignatiadis2025empirical}, motivated by applications to multiple testing. By contrast, for model~\ref{(GL)}, convergence rate analyses have a longer history: density estimation results go back at least to \citet{ghosal2001entropies} and were further developed by \citet{zhang2009generalized}, while convergence rate results for empirical Bayes estimation were established by \citet{jiang2009general}. We view these two examples as representative of two broader scenarios:

\begin{enumerate}
\renewcommand{\theenumi}{(S1)}
\renewcommand{\labelenumi}{\theenumi}
\item \label{(S1)}
Assume \ref{(EF)} holds. In addition, suppose that \(M<\infty\) and that there exists \(\tau>0\) such that
\[
[-M-\tau,\,M+\tau]\subseteq \Theta.
\]

\renewcommand{\theenumi}{(S2)}
\renewcommand{\labelenumi}{\theenumi}
\item \label{(S2)}
Assume \ref{(EF)} holds. In addition, suppose that \(M=\infty\), that \(p_0\) has mean zero, and that there exist constants \(\alpha_1\ge \alpha_2>0\) such that
\begin{equation}\label{eq:CGF}
c_1|\theta|^{1+1/\alpha_1}\le \kappa(\theta)\le c_2|\theta|^{1+1/\alpha_2},
\qquad
|\theta|\ge c_0,
\end{equation}
for some constants \(c_0,c_1,c_2>0\).
\end{enumerate}

These two scenarios place assumptions on one of the two main ingredients of the mixture model: the mixing class \eqref{eq:mixing}, determined by \(M\), and the component family, determined by \(p_0\). Scenario~\ref{(S1)} assumes that the mixing distribution is supported on a compact interval, while allowing the baseline density \(p_0\) to be fairly general. In particular, it permits \(p_0\) to have tails of the same order as the exponential distribution, which is essentially the heaviest tail compatible with the exponential family framework. 

By contrast, scenario~\ref{(S2)} allows the mixing distribution to have
support on all of \(\mathbb{R}\), while imposing conditions on the tail
behavior of \(p_0\). In particular, it requires the tails of \(p_0\) to be
neither too heavy nor too light. The lower bound on \(\kappa(\theta)\)
rules out compactly supported \(p_0\), whereas the upper bound excludes
densities too close to exponential. A similar assumption appears in
Theorem~5 of \citet{polyanskiy2020self}.

\begin{remark}[Mixtures of power series distributions]
Our work is closely related to \citet{balabdaoui2025parametric}, who study
marginal density estimation via the NPMLE in an exponential-family setting
with \(\mu\) equal to the counting measure on \(\mathbb{N}\). Although our
framework covers a broader class of models, it is best viewed as
complementary to, rather than an extension of, their work.
Their analysis relies jointly on four assumptions, (A1)--(A4).
Assumptions~(A1) and~(A2) require \(g_0\) to have compact support, which is
closely related in spirit to scenario~\ref{(S1)}, except that no compact
interval containing the support of \(g_0\) is assumed to be known a priori.
Assumptions~(A3) and~(A4) impose that the tail of \(p_0\) be neither too
heavy nor too light. Compared with scenario~\ref{(S2)}, their setting is
restricted to a one-sided, relatively ``heavy-tailed'' regime, covering, for instance, the
Poisson and geometric distributions.
\end{remark}

Prior work on the NPMLE \eqref{eq:NPMLE} \citep{zhang2009generalized,ignatiadis2022confidence,jana2025optimal,balabdaoui2025parametric} suggests that a light-tailed $f_{g_0}$ is essential for $f_{\hat{g}}$ to exhibit near-parametric behavior. We therefore assume:
\begin{enumerate}
\renewcommand{\theenumi}{(LT)}
\renewcommand{\labelenumi}{\theenumi}
\item \label{(LT)} There exists $\beta_0>0$ such that
\[
\bP(|X|_n \ge t)\le n\exp(-b_1 t^{\beta_0}), \qquad t\ge b_0,
\]
for some $b_0,b_1>0$.
\end{enumerate}
In scenario~\ref{(S1)}, this condition is automatically satisfied with $\beta_0=1$. In scenario~\ref{(S2)}, by contrast, it requires additional assumptions on $g_0$. See Lemma~\ref{lem:tail} for a discussion.

\subsection{Approximate NPMLE via data compression}\label{sec:data-compression}

Under \ref{(EF)}, an important observation is that the NPMLE $\hat g\in\cG$ can equivalently be expressed as a maximizer of
\[
\int \log l_g(x)\,\dif P_n(x)=\frac{1}{n}\sum_{i=1}^n \log l_g(X_i),
\]
where $P_n$ denotes the empirical measure based on the observations, and $l_g(x)$ is the likelihood ratio function given by
\[
l_g(x)\coloneqq \int l_\theta(x)\,\dif g(\theta), \qquad
l_\theta(x)\coloneqq \frac{p_\theta(x)}{p_0(x)}=\exp\bigl(\theta x-\kappa(\theta)\bigr).
\]
In the discrete setting, the key computational idea behind \eqref{eq:trick} is to compress $P_n$ into a discrete measure whose number of support points is much smaller than $n$. Guided by this perspective, we propose the following procedure for handling general observations. 

Given a sequence of (possibly random) positive integers $\{J_n\}_{n\ge1}$, let \(P_{J_n}\) denote an order-\(J_n\) Gaussian quadrature rule for \(P_n\). By construction, \(P_{J_n}\) and \(P_n\) share the same first \(2J_n-1\) moments. The existence of \(P_{J_n}\) is standard; see Theorem 2A of \citet{lindsay1989moment}. Based on \(P_{J_n}\), we define the approximate NPMLE by
\begin{equation}\label{eq:hatgJn}
\hat{g}_{J_n}\in\arg\sup_{g\in\mathcal G}\int \log l_g(x)\,\mathrm{d}P_{J_n}(x).
\end{equation}
The construction is summarized in Algorithm~\ref{alg:gaussian-quadrature}. This procedure naturally includes the discrete setting discussed above. Indeed, it suffices to take \(J_n = |X|_n + 1\), in which case \(P_{J_n} = P_n\), and therefore \(\hat g_{J_n} = \hat g\). By contrast, in many important cases, such as \ref{(SC)} and \ref{(GL)}, one typically has \(P_{J_n}\neq P_n\) unless \(J_n=n\), where no computational savings are obtained. This motivates the question of how large \(J_n\) should be in order for \(\hat{g}_{J_n}\) to be sufficiently close to \(\hat g\) under an appropriate criterion.

In classical discussions of NPMLE computation, one seeks an estimator \(\tilde g\in\mathcal G\) satisfying the likelihood criterion
\begin{equation}\label{eq:LRT}
\ell_n(f_{\hat g})-\ell_n(f_{\tilde g})\le \Delta_n,
\end{equation}
where \(\{\Delta_n\}_{n\ge 1}\) is a sequence of tolerance levels. The presence of a positive gap \(\Delta_n\) in \eqref{eq:LRT} is computationally unavoidable because \(\mathcal G\) is infinite-dimensional. Given a candidate \(\tilde g\), one can often compute an upper bound on this likelihood gap without explicitly evaluating \(\hat g\); see Section 6.4 of \citet{lindsay1995mixture}. The central question is therefore how small \(\Delta_n\) must be for the approximate NPMLE \(\tilde g\) to inherit the key statistical properties of \(\hat g\). The answer is task-dependent: different inferential goals require different levels of likelihood accuracy. We give a brief discussion of known tolerance requirements in Section~\ref{sec:tolerance}.

Our main result characterizes the relationship between $J_n$ and $\Delta_n$ when $\tilde{g} = \hat{g}_{J_n}$.

\begin{theorem}\label{thm:approximateNPMLE1} Assume \ref{(EF)}. Let \(\{M_n\}_{n\ge1}\) be a sequence of positive numbers such that
\begin{equation}\label{eq:bound}
\mathrm{supp}(\hat g)\cup \mathrm{supp}(\hat g_{J_n})\subseteq [-M_n,M_n].
\end{equation}
Assume that \(|X|_n M_n \ge 1\). Then the likelihood criterion \eqref{eq:LRT} holds for $\tilde{g}=\hat{g}_{J_n}$\footnote{In practice, $\hat g_{J_n}$ is rarely computed exactly. Instead, an approximation error is introduced to account for its deviation from the true maximizer. Theorem~\ref{thm:approximateNPMLE2} formalizes this, yielding a stronger version of Theorem~\ref{thm:approximateNPMLE1}. } provided that
\[
J_n
=
\bigg\lceil
2|X|_nM_n
\log\Big(
\frac{Cn|X|_nM_n(|X|_nM_n+\sup_{|\theta|\le M_n}|\kappa(\theta)|)}{\Delta_n}
\Big)
\bigg\rceil,
\]
where \(C>0\) is a universal constant, and \(\lceil x\rceil\) denotes the smallest integer larger than or equal to \(x\in\mathbb R\).
\end{theorem}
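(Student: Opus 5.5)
The plan is to compare the two objectives through a polynomial approximation of \(\log l_g\) on the compact interval \([-|X|_n,|X|_n]\), which contains the support of both \(P_n\) and \(P_{J_n}\); recall that a Gaussian quadrature rule has its nodes inside the convex hull of the data. Write \(L_n(g)=\int \log l_g\,\dif P_n\) and \(L_{J_n}(g)=\int\log l_g\,\dif P_{J_n}\). Then
\[
L_n(\hat g)-L_n(\hat g_{J_n}) = \bigl[L_n(\hat g)-L_{J_n}(\hat g)\bigr] + \bigl[L_{J_n}(\hat g)-L_{J_n}(\hat g_{J_n})\bigr] + \bigl[L_{J_n}(\hat g_{J_n})-L_n(\hat g_{J_n})\bigr].
\]
The middle bracket is \(\le 0\) by the definition of \(\hat g_{J_n}\). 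It therefore suffices to show that, for every \(g\) supported in \([-M_n,M_n]\),
\[
\Bigl|\int \log l_g\,\dif(P_n-P_{J_n})\Bigr|\le \frac{\Delta_n}{2n}.
\]
Since \(\ell_n(f_g)-\ell_n(f_{g'})=n(L_n(g)-L_n(g'))\), the two outer brackets then contribute at most \(\Delta_n/n\) in total, which gives \eqref{eq:LRT}. Assumption \eqref{eq:bound} is used exactly here: it guarantees that both \(\hat g\) and \(\hat g_{J_n}\) belong to this class.

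The two measures share their first \(2J_n-1\) moments, so for any polynomial \(q\) of degree at most \(2J_n-1\),
\[
\Bigl|\int \log l_g\,\dif(P_n-P_{J_n})\Bigr|\le 2\sup_{|x|\le |X|_n}\bigl|\log l_g(x)-q(x)\bigr|.
\]
So the task reduces to a uniform polynomial approximation bound for \(h_g(x)=\log\int e^{\theta x-\kappa(\theta)}\dif g(\theta)\) on \([-R,R]\), with \(R=|X|_n\), uniform over \(g\). The approach is Bernstein's theorem for analytic functions. Rescale to \(t=x/R\in[-1,1]\) and extend to complex \(x=R(t)\) with \(t\) in the Bernstein ellipse \(E_\rho\). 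The function \(l_g(x)=\int e^{\theta x-\kappa(\theta)}\dif g\) is entire in \(x\). Write \(x=u+iv\), so that \(|v|\le R(\rho-\rho^{-1})/2\). Then
\[
l_g(u+iv)=\int e^{\theta u-\kappa(\theta)}e^{i\theta v}\dif g(\theta),
\]
and if \(M_n|v|\le \pi/3\), we get \(\mathrm{Re}\,e^{i\theta v}\ge 1/2\). Hence \(|l_g(x)|\ge \tfrac12 l_g(u)>0\), so \(l_g\) has no zeros and \(\log l_g\) is analytic on the ellipse. This forces \(\rho-1\asymp 1/(RM_n)\), which explains the factor \(|X|_nM_n\) in \(J_n\).

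On that ellipse the bound \(|\log l_g|\le |\log|l_g||+|\arg l_g|\) gives
\[
|\log l_g|\lesssim RM_n+\sup_{|\theta|\le M_n}|\kappa(\theta)|+1 =: B,
\]
using \(|\theta u|\le M_nR\rho\) and \(|\arg l_g|\le\pi/2\). Bernstein's estimate then gives approximation error at most
\[
\frac{2B\rho^{-(2J_n-1)}}{\rho-1}\lesssim RM_n\,B\,\exp\!\Bigl(-\frac{c(2J_n-1)}{RM_n}\Bigr).
\]
With the explicit choice \(\rho=1+1/(2RM_n)\) or similar, one can tune constants so that the exponent reads \(J_n/(2|X|_nM_n)\) in the form stated. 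We use \(RM_n\ge1\) to absorb additive constants into \(RM_n\). Setting this error to \(\Delta_n/(4n)\) and solving for \(J_n\) yields exactly the stated formula, with \(\log(Cn|X|_nM_n(|X|_nM_n+\sup|\kappa|)/\Delta_n)\).

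The main obstacle is the zero-free analytic continuation of \(l_g\) uniformly over all \(g\) supported in \([-M_n,M_n]\), together with the matching of constants. The nonvanishing argument via \(\mathrm{Re}\,e^{i\theta v}\ge\cos(M_n|v|)\) is the one I would try first. Some care is needed to pick the ellipse parameter so that \(\log\rho\ge 1/(2|X|_nM_n)\) (up to the universal constant) while keeping \(M_n\cdot\mathrm{Im}\) bounded by \(\pi/3\). A secondary point is the logarithmic factor in the bound on \(|\log l_g|\): it requires a lower bound \(l_g(u)\ge e^{-M_nR\rho-\sup|\kappa|}\), which holds because \(g\) is a probability measure.
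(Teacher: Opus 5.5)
Your proposal is correct and follows essentially the same route as the paper: the same three-term decomposition with the middle term nonpositive by optimality of $\hat g_{J_n}$, reduction via moment matching to uniform polynomial approximation of $\log l_g$ on $[-|X|_n,|X|_n]$, and Bernstein's theorem on an ellipse of imaginary half-width of order $1/M_n$, on which $\Re\, l_g>0$ by the cosine argument so the principal logarithm is analytic and bounded by $C|X|_nM_n+\sup_{|\theta|\le M_n}|\kappa(\theta)|$. The only differences are cosmetic and affect constants only: you use the full degree $2J_n-1$ (the paper uses degree $J_n$), and you pick a slightly different ellipse parameter (the paper's half-width is $\pi/(4M_n)$, and it converts to the stated $J_n$ via $(1+1/x)^x\ge 2$).
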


Theorem~\ref{thm:approximateNPMLE1} applies under the basic exponential family setup, without any further assumptions. The following corollary shows that, under both scenarios considered here, polylogarithmic growth of \(J_n\) is sufficient to ensure a strong likelihood approximation by \(\hat g_{J_n}\).

\begin{corollary}\label{cor:order}
Fix some \(\gamma>0\). To guarantee \eqref{eq:LRT} for $\tilde{g}=\hat{g}_{J_n}$ with \(\Delta_n=n^{-\gamma}\), one may choose \(J_n\) as in Theorem~\ref{thm:approximateNPMLE1}. In that case,
\begin{itemize}
\item Under \ref{(S1)},
\[
J_n = O_{\mathbb P}\bigl((\log n)^2\bigr).
\]

\item Under \ref{(S2)} and \ref{(LT)},
\[
J_n = O_{\mathbb P}\bigl((\log n)^{1+(1+\alpha_1)/\beta_0}\bigr).
\]
\end{itemize}
\end{corollary}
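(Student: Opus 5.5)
The plan is to apply Theorem~\ref{thm:approximateNPMLE1} with \(\Delta_n=n^{-\gamma}\) in each scenario. To do so we need an explicit admissible \(M_n\) satisfying \eqref{eq:bound}, a stochastic bound on \(|X|_n\), and a bound on \(\sup_{|\theta|\le M_n}|\kappa(\theta)|\). With these in hand, the logarithm in the formula for \(J_n\) will turn out to be \(O_{\mathbb P}(\log n)\), and \(J_n\) is then of order \(|X|_nM_n\log n\).

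For the side condition \(|X|_nM_n\ge1\), note that \(|X|_n\) is nondecreasing in \(n\) and \(|X|_n\ge|X_1|>0\) almost surely, assuming \(p_0\) is nondegenerate. Hence we may enlarge \(M_n\) to \(\max(M_n,1/|X_1|)\) when needed. This only changes \(M_n\) by an \(O_{\mathbb P}(1)\) amount, and any interval containing the supports remains admissible in \eqref{eq:bound}.

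\emph{Scenario~\ref{(S1)}.} Since every \(g\in\mathcal G\) is supported in \([-M,M]\), we take \(M_n=M\), and \(\sup_{|\theta|\le M}|\kappa(\theta)|=K<\infty\) because \(\kappa\) is continuous on \([-M,M]\subseteq\Theta\). Condition \ref{(LT)} holds with \(\beta_0=1\). Taking \(t=(2/b_1)\log n\) gives \(\mathbb P(|X|_n\ge t)\le n^{-1}\), so \(|X|_n=O_{\mathbb P}(\log n)\). The argument of the logarithm is then \(O_{\mathbb P}\bigl(n^{1+\gamma}(\log n)^2\bigr)\), so the logarithm is \(O_{\mathbb P}(\log n)\). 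Therefore \(J_n=O_{\mathbb P}(\log n\cdot\log n)=O_{\mathbb P}((\log n)^2)\).

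\emph{Scenario~\ref{(S2)}.} Here \(M=\infty\), so the crux, and the step I expect to be the main obstacle, is localizing the supports of \(\hat g\) and \(\hat g_{J_n}\).

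I would use the gradient (directional derivative) characterization of the NPMLE for a generic discrete measure \(Q\in\{P_n,P_{J_n}\}\) with maximizer \(\hat g_Q\). Define
\[
D(\theta)=\int \frac{l_\theta(x)}{l_{\hat g_Q}(x)}\,\mathrm dQ(x).
\]
Then \(D(\theta)\le 1\) for all \(\theta\), with equality on \(\mathrm{supp}(\hat g_Q)\). Moreover,
\[
\partial_\theta l_\theta(x)=(x-\kappa'(\theta))\,l_\theta(x).
\]
If \(\kappa'(\theta)>\max\mathrm{supp}(Q)\), then \(D'(\theta)<0\). So a support point \(\theta\) with this property would give \(D(\theta-\epsilon)>1\), a contradiction. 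The symmetric argument applies on the left.

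Since Gaussian quadrature nodes lie in the convex hull of the data, both supports satisfy \(|\kappa'(\theta)|\le|X|_n\). Next I use convexity of \(\kappa\) together with \(\kappa(0)=0\) (as \(p_0\) is a density) and \eqref{eq:CGF}. For \(\theta\ge c_0\), this gives
\[
\kappa'(\theta)\ge\kappa(\theta)/\theta\ge c_1\theta^{1/\alpha_1},
\]
and similarly for negative \(\theta\). Hence we can take
\[
M_n=C\max(|X|_n,c)^{\alpha_1}.
\]

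By \ref{(LT)} with \(t=(2\log n/b_1)^{1/\beta_0}\), we get \(|X|_n=O_{\mathbb P}((\log n)^{1/\beta_0})\). Therefore
\[
|X|_nM_n=O_{\mathbb P}\bigl((\log n)^{(1+\alpha_1)/\beta_0}\bigr).
\]
The upper bound in \eqref{eq:CGF} gives \(\sup_{|\theta|\le M_n}|\kappa(\theta)|\lesssim 1+M_n^{1+1/\alpha_2}\), which is polylogarithmic in \(n\). Consequently the logarithm in the formula for \(J_n\) is again \(O_{\mathbb P}(\log n)\). Combining, \(J_n=O_{\mathbb P}\bigl((\log n)^{1+(1+\alpha_1)/\beta_0}\bigr)\), as claimed.
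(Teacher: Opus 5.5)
Your proof is correct and is essentially the paper's argument. Under (S1) you take $M_n=M$ and use $|X|_n=O_{\mathbb{P}}(\log n)$. Under (S2) you localize the supports of $\hat g$ and $\hat g_{J_n}$ to $\{\theta:|\kappa'(\theta)|\le|X|_n\}$, get $M_n\lesssim|X|_n^{\alpha_1}$ from convexity of $\kappa$ and \eqref{eq:CGF}, and combine this with $|X|_n=O_{\mathbb{P}}((\log n)^{1/\beta_0})$. The only difference is that you derive the localization directly from the gradient condition $D(\theta)\le1$. The paper (Lemma~\ref{lem:scenario2}) instead cites Proposition~25 of \citet{lindsay1995mixture} together with unimodality of $\theta\mapsto l_\theta(x)$, which rests on the same argument.

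One correction concerns your extra remark on the side condition $|X|_nM_n\ge1$, which the paper's proof passes over. You justify replacing $M_n$ by $\max(M_n,1/|X_1|)$ by saying it ``only changes $M_n$ by an $O_{\mathbb{P}}(1)$ amount.'' This ignores that $M_n$ also enters $J_n$ through $\sup_{|\theta|\le M_n}|\kappa(\theta)|$, which can be infinite. Under (S1) the canonical space may be bounded: for (SC), $\Theta=(-\infty,\nu/(2\sigma^2))$, so $\max(M,1/|X_1|)$ leaves $\Theta$ whenever $|X_1|<2\sigma^2/\nu$, an event of fixed positive probability. Moreover, $|X_1|>0$ a.s.\ fails for lattice families with an atom at $0$, such as the Poisson.

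The patch is harmless for a different reason: it is needed only on $\{|X|_nM_n<1\}$, whose probability tends to zero. Under (S1) this probability equals $\mathbb{P}(|X_1|<1/M)^n$. Under (S2), $M_n$ is bounded below and $p_0$ has unbounded support, so $|X|_n\to\infty$. The one exception, possible only under (S1), is when $p_0$ is concentrated on $(-1/M,1/M)$. In that case $|\kappa(\theta)|\le|\theta|/M$ on all of $\mathbb{R}$, so enlarging $M_n$ to $1/|X|_n$ costs nothing.
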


 This corollary, however, does not by itself provide concrete guidance for the practical choice of $J_n$. In practice, we suggest choosing $J_n$ as the largest value for which $P_{J_n}$ can still be computed in a numerically stable manner. In our implementation, we employ the Golub--Welsch algorithm \citep{golub1969calculation}, which computes $P_{J_n}$ accurately and stably for $J_n$ up to about 20--30. This range is typically adequate even for sample sizes as large as $100{,}000$.

Although constructing \(P_{J_n}\) still incurs at least linear cost in \(n\),
this one-time preprocessing step is substantially cheaper than computing the
NPMLE on the full sample. In our simulations, it is more than two orders of
magnitude faster than the implementations of \citet{koenker2014convex} and
\citet{zhang2024efficient}; see Section~\ref{sec:simulations}.

Algorithm~\ref{alg:gaussian-quadrature} is presented in one dimension for
theoretical clarity, but the same idea extends to multivariate settings. As an
illustration, Section~\ref{sec:heter} treats the heteroscedastic Gaussian
sequence model, where observing both \(X_i\) and its variability measure makes
the computation essentially two-dimensional.

\subsection{Marginal density estimation with approximate NPMLE}\label{sec:marginal-density}

The purpose of this section is to show that, in the general exponential family settings \ref{(S1)} and \ref{(S2)}, estimators that attain sufficiently high sample likelihood in the sense of \eqref{eq:LRT} also enjoy strong guarantees for marginal density estimation. Earlier results of this kind were available only for particular models, such as Gaussian location mixtures \citep{zhang2009generalized}, and were formulated in terms of the squared Hellinger distance
\[
H^2(f,f_0)\coloneqq\int \big(\sqrt{f(x)}-\sqrt{f_0(x)}\big)^2\dif\mu(x),
\]
where $f$ and $f_0$ are densities with respect to $\mu$.

Traditional analyses of the NPMLE typically proceed by controlling the entropy of the class of marginal densities
\[
\{f_g : g \in \mathcal{G}\},
\]
with respect to an appropriately restricted supremum norm. In general exponential family settings, however, this approach becomes difficult to implement, since the base density \(p_0\) may be highly irregular. A different argument is therefore required.

Under scenario~\ref{(S1)}, the parameter space \(\Theta_0\) is bounded. Using the notation introduced there, we define the surrogate density
\[
\bar{f}_g(x)\coloneqq\int \bar{p}_\theta(x)\,\mathrm{d}g(\theta),
\qquad
\bar{p}_\theta(x)\coloneqq l_\theta(x)\exp\bigl(-(M+\tau)|x|\bigr).
\]
By controlling the size of the class
\[
\bar{\mathcal{F}}\coloneqq\{\bar{f}_g:g\in\mathcal{G}\},
\]
as established in Lemma~\ref{lem:entropy}, we obtain the following convergence result.

\begin{theorem}\label{thm:density-rate1}
Suppose that scenario~\ref{(S1)} holds, and let \(\tilde g\) be an approximate NPMLE satisfying \eqref{eq:LRT} with \(\Delta_n = A(\log n)^2\) for some constant \(A>0\). Then there exists a constant \(C_{p_0,M,A}>0\), depending only on \(p_0\), \(M\), and \(A\), such that
\[
\mathbb P\Big(
H^2(f_{\tilde g},f_{g_0})
\ge
t\,C_{p_0,M,A}\frac{(\log n)^2}{n}
\Big)
\le
\exp\bigl(-t(\log n)^2\bigr),
\]
for all \(t\ge 1\) and \(n\ge 1\).
\end{theorem}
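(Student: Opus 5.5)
We follow the Wong--Shen / Zhang (2009) route for approximate maximum likelihood, with the surrogate class $\bar{\mathcal F}$ playing the role of the sup-norm net.

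\textbf{Reduction to the surrogate class.} Write $L_i(g)\coloneqq \bar f_g(X_i)/\bar f_{g_0}(X_i)$. Since $p_\theta(x)=l_\theta(x)p_0(x)$ and $\bar p_\theta(x)=l_\theta(x)e^{-(M+\tau)|x|}$, the ratio $f_g(x)/f_{g_0}(x)=\bar f_g(x)/\bar f_{g_0}(x)$ for every $x$. Hence the likelihood criterion \eqref{eq:LRT} together with $\ell_n(f_{\hat g})\ge \ell_n(f_{g_0})$ gives
\[
\sum_{i=1}^n\log L_i(\tilde g)\ge -\Delta_n = -A(\log n)^2 .
\]
It therefore suffices to show that, with probability at least $1-e^{-t(\log n)^2}$, every $g\in\mathcal G$ with $H^2(f_g,f_{g_0})\ge \varepsilon^2\coloneqq tC(\log n)^2/n$ satisfies
\[
\sum_{i}\log L_i(g)< -A(\log n)^2 .
\]

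\textbf{Net and Hellinger-type bound.} By Lemma~\ref{lem:entropy}, $\bar{\mathcal F}$ admits an $\eta$-net in sup norm with $\eta=n^{-2}$ of cardinality $N$, where $\log N\lesssim(\log n)^2$. The bound uses only that $\Theta_0$ is compact and that the weight $e^{-(M+\tau)|x|}$ makes $\bar p_\theta$ bounded and exponentially decaying; I expect it takes the form $\log N\lesssim \log(1/\eta)^2$, which is what produces the $(\log n)^2$ rate.

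For each net element $h_j$, use the envelope $u_j=h_j+\eta\, w(x)$, where $w(x)\coloneqq e^{-\tau'|x|}$ for a small $\tau'<\tau$. This $w$ is integrable against $p_0(x)e^{(M+\tau)|x|}\,d\mu$, because $[-M-\tau,M+\tau]\subseteq\Theta$ lets one borrow a slightly smaller exponent. If $\bar f_g$ lies within $\eta$ of $h_j$ in the appropriately weighted sup norm, then $\bar f_g\le u_j$, so
\[
\prod_i L_i(g)\le \prod_i \frac{u_j(X_i)}{\bar f_{g_0}(X_i)} .
\]
By Markov's inequality applied to the square root,
\[
\mathbb P\Big(\prod_i \frac{u_j}{\bar f_{g_0}}(X_i)\ge e^{-A(\log n)^2}\Big)\le e^{A(\log n)^2/2}\,\Big(\int\sqrt{u_j f_{g_0}/\bar f_{g_0}}\;\bar f_{g_0}\,\ldots\Big)^n .
\]
Converting back with the factor $p_0e^{(M+\tau)|x|}$, the integral equals $\int \sqrt{\tilde u_j f_{g_0}}\,d\mu$, where $\tilde u_j= f_{h}+O(\eta)$ is a near-density. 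Consequently
\[
\int\sqrt{\tilde u_j f_{g_0}}\,d\mu\le 1-\tfrac12 H^2(f_g,f_{g_0})+O(\sqrt\eta)+O(\eta)\le \exp\bigl(-\varepsilon^2/4\bigr)
\]
once $\varepsilon^2\gg\sqrt\eta=n^{-1}$.

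\textbf{Union bound.} Summing over the net gives
\[
\exp\bigl(C'(\log n)^2 + \tfrac{A}{2}(\log n)^2 - n\varepsilon^2/4\bigr)\le \exp\bigl(-t(\log n)^2\bigr)
\]
once $C$ is chosen large relative to $C'$ and $A$, using $t\ge1$. Small $n$ are absorbed by enlarging the constant.

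\textbf{Main obstacle.} The delicate step is the envelope. The difference between the sup-norm closeness of $\bar f_g$ and $h_j$ and the $L^1(\mu)$ mass error of $\tilde u_j$ must be controlled through
\[
\int e^{-\tau'|x|}e^{(M+\tau)|x|}p_0\,d\mu<\infty,
\]
and this requires choosing the net in a weighted sup norm, with weight $e^{-(\tau-\tau')|x|}$, rather than the plain sup norm. I would first check that Lemma~\ref{lem:entropy} is stated in such a weighted norm. Otherwise I would apply it to the class with exponent $M+\tau/2$ in place of $M+\tau$, which supplies exactly the extra decay needed.
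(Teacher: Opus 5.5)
Your proposal is correct and is essentially the paper's argument. It passes to the surrogate ratio $\bar f_g/\bar f_{g_0}=f_g/f_{g_0}$, covers $\bar{\mathcal F}$ in sup norm at scale $n^{-2}$ via Lemma~\ref{lem:entropy}, dominates each bad ball by an additive envelope, applies Markov to the square root of the likelihood-ratio product, and takes a union bound.

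The ``main obstacle'' you flag is not one. The paper keeps the plain sup norm with the constant envelope $\bar f_{k,0}+2\varepsilon$. It picks a bad representative $\bar f_{k,0}$ in each ball, which also avoids the Hellinger triangle-inequality step you implicitly need to pass from $f_{h_j}$ to $f_g$. It then uses $\sqrt{a+b}\le\sqrt a+\sqrt b$ to bound the error by $\sqrt{2\varepsilon}\,I_0$, where $I_0=\int \exp\bigl((M+\tau/2)|x|-\tfrac12\inf_{|\theta|\le M}\kappa(\theta)\bigr)p_0\,\mathrm{d}\mu<\infty$. Even in your own mass accounting, $w\equiv 1$ suffices, because $\int e^{(M+\tau)|x|}p_0\,\mathrm{d}\mu\le Z(M+\tau)+Z(-M-\tau)<\infty$ under scenario~\ref{(S1)}.
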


\begin{remark}
The above result implies Theorem 9 of \citet{ignatiadis2025empirical}, which plays a central role in their theoretical analysis. In comparison with Theorem 2.2 of \citet{balabdaoui2025parametric}, we obtain a faster convergence rate together with a stronger tail bound. This improvement stems from the additional assumption that the support of $g_0$ is contained in a known compact interval $\Theta_0$ that is strictly smaller than the full parameter space $\Theta$.
\end{remark}

In scenario~\ref{(S2)}, we have \(\Theta_0=\Theta=\mathbb{R}\). Instead of introducing a surrogate density, we work directly with the class of likelihood ratio functions
\[
\mathcal{L} \coloneqq \{l_g : g\in\cG\}. 
\]
A corresponding entropy bound for this class, established in Lemma~\ref{lem:entropy2}, yields the following result.

\begin{theorem}\label{thm:density-rate2}
Assume that scenario~\ref{(S2)} and condition~\ref{(LT)} hold, and let \(\tilde g\) be an approximate NPMLE satisfying \eqref{eq:LRT} with \(\Delta_n=A(\log n)^{\gamma_0}\) for some constant \(A>0\). Then there exists a constant
\(C_{p_0,g_0,A}>0\), depending only on \(p_0\), \(g_0\), and \(A\), such that
\[
\mathbb P\Big(
H^2(f_{\tilde g},f_{g_0})
\ge
t\,C_{p_0,g_0,A}\frac{(\log n)^{\gamma_0}}{n}
\Big)
\le
2\exp\bigl(-t^{\gamma_1}\log n\bigr),
\]
for all \(t\ge1\) and \(n\ge C_{p_0,g_0,A}\), where
\[
\gamma_0
\coloneqq
2\Bigl(\frac{1+\alpha_1}{\beta_0}\vee 1\Bigr),
\qquad
\gamma_1
\coloneqq
\frac{\beta_0}{2(1+\alpha_1)}\wedge 1.
\]
\end{theorem}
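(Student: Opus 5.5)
The plan follows the standard likelihood-ratio / bracketing-entropy route of \citet{zhang2009generalized}, adapted to the likelihood-ratio class \(\mathcal L\) and localized to a data-dependent window. We work on the event \(E_n=\{|X|_n\le L_n\}\) with \(L_n\coloneqq (K\log n/b_1)^{1/\beta_0}\), where \(K\) is large. By \ref{(LT)}, \(\bP(E_n^c)\le n^{1-K}\). More importantly, replacing \(K\) by \(K t^{\gamma_1}\) gives a tail of order \(\exp(-t^{\gamma_1}\log n)\); this is where the exponent \(\gamma_1\) and the factor \(2\) in the final bound come from. On \(E_n\), only the restriction of \(l_g\) to \([-L_n,L_n]\) enters the likelihood. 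Since \(\ell_n(f_g)-\ell_n(f_{g_0})=\sum_i\log\bigl(l_g(X_i)/l_{g_0}(X_i)\bigr)\), the criterion \eqref{eq:LRT} gives
\[
\sum_i \log\frac{l_{\tilde g}(X_i)}{l_{g_0}(X_i)}\ge -\Delta_n .
\]

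The key input is Lemma~\ref{lem:entropy2}. We would use it in the following form: for \(\eta\) polynomially small in \(n\), the class \(\mathcal L\) restricted to \([-L_n,L_n]\) admits an \(\eta\)-net \(\{l_{g_j}\}_{j\le N}\) in the sup norm weighted by \(p_0\), or equivalently by an envelope, with \(\log N\lesssim (L_n^{1+\alpha_1})\log(1/\eta)\vee(\log(1/\eta))^2\). The heuristic is as follows. Because of \eqref{eq:CGF}, a mixing atom at \(|\theta|\) large contributes \(\exp(\theta x-\kappa(\theta))\), which is negligible on \([-L_n,L_n]\) once \(|\theta|\gtrsim L_n^{\alpha_1}\). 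Hence we may truncate \(g\) to \([-CL_n^{\alpha_1},CL_n^{\alpha_1}]\). We then moment-match, as in the Gaussian quadrature construction, using \(\asymp L_nL_n^{\alpha_1}+\log(1/\eta)\) atoms. Plugging \(L_n\asymp(\log n)^{1/\beta_0}\) gives \(\log N\lesssim(\log n)^{\gamma_0}\), because \(\gamma_0/2=(1+\alpha_1)/\beta_0\vee1\) and one further \(\log n\) comes from \(\log(1/\eta)\).

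With the net in hand, the argument runs as follows. Suppose \(H^2(f_{\tilde g},f_{g_0})\ge \epsilon_n^2\coloneqq tC(\log n)^{\gamma_0}/n\). Choose \(j\) with \(l_{g_j}\) \(\eta\)-close to \(l_{\tilde g}\) and set \(u_j\coloneqq l_{g_j}+\eta v\), where \(v\) is an integrable envelope, for example \(v(x)\propto\) a tail majorant times \(p_0\)-weight correction. Then \(\log(l_{\tilde g}/l_{g_0})\le \log(u_j/l_{g_0})\) on \(E_n\). We apply the Chernoff bound for the square root:
\[
\bP\Bigl(\prod_i \frac{u_j(X_i)}{l_{g_0}(X_i)}\ge e^{-\Delta_n}\Bigr)\le e^{\Delta_n/2}\Bigl(\int\sqrt{u_j p_0\, f_{g_0}}\,\dif\mu\Bigr)^n .
\]
Next we bound \(\int\sqrt{u_jp_0f_{g_0}}\le 1-\tfrac12H^2(f_{g_j},f_{g_0})+O(\sqrt\eta)\), and \(H(f_{g_j},f_{\tilde g})\) is small by closeness. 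This gives a bound \(\exp(\Delta_n/2-n\epsilon_n^2/8)\) for each \(j\). A union bound over \(N\) elements costs \(e^{\log N}\). With \(\Delta_n=A(\log n)^{\gamma_0}\) and \(\log N\lesssim(\log n)^{\gamma_0}\), choosing \(C\) large relative to \(A\) and the entropy constant gives total probability \(\le\exp(-ct(\log n)^{\gamma_0})\le\exp(-t^{\gamma_1}\log n)\). Adding \(\bP(E_n^c)\) completes the bound.

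The main obstacle is the interplay between truncation and \(t\). Enlarging \(L_n\) with \(t\) so that \(\bP(E_n^c)\) decays in \(t\) inflates the entropy, since \(\log N\) scales like \(t^{\gamma_1(1+\alpha_1)/\beta_0}(\log n)^{\gamma_0}\lesssim t^{1/2}\cdots\). This must be absorbed by the \(t\) in \(\epsilon_n^2\); that is exactly why \(\gamma_1=\beta_0/(2(1+\alpha_1))\wedge1\) is chosen. A second delicate point is controlling the envelope term \(\eta v\) and the points outside \([-L_n,L_n]\) in the Hellinger integral. For this we would use \(\kappa(\theta)\ge c_1|\theta|^{1+1/\alpha_1}\) to bound \(\sup_g l_g(x)\le\exp(C|x|^{1+\alpha_1})\) via Legendre duality, combined with the light tail of \(f_{g_0}\).
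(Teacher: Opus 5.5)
Your architecture matches the paper's. You restrict to $\{|X|_n\le B\}$ with $B\asymp t^{1/[2(1+\alpha_1)]}(\log n)^{1/\beta_0}$, which is where $\gamma_1$ comes from. You take a $\|\cdot\|_{\infty,B}$-net of $\mathcal L$ from Lemma~\ref{lem:entropy2} at scale $n^{-2}$, dominate $l_{\tilde g}$ at the data points by a net element plus a small constant, and finish with the square-root Chernoff bound and a union bound. A minor point: $\gamma_0$ comes from the \emph{square} in the lemma's bound, $[\log(1/\varepsilon)+B^{1+\alpha_1}]^2\lesssim t(\log n)^{\gamma_0}$, not from ``one further $\log n$''. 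The sharper form $L_n^{1+\alpha_1}\log(1/\eta)\vee(\log(1/\eta))^2$ that you attribute to the lemma is not what it states, though you do not need it.

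The real problem is one step that fails as written. You assert that $H(f_{g_j},f_{\tilde g})$ is small because $l_{g_j}$ and $l_{\tilde g}$ are $\eta$-close in sup norm on $[-L_n,L_n]$. Closeness on a window gives no control of mass outside it. Take $g_j=\tfrac12(\delta_0+\delta_R)$ and $\tilde g=\tfrac12(\delta_0+\delta_{-R})$ with $R\gg L_n^{\alpha_1}$. Their likelihood ratios differ by at most $\exp(RL_n-c_1R^{1+1/\alpha_1})$ on $[-L_n,L_n]$. Yet $H^2(f_{g_j},f_{\tilde g})$ stays bounded away from zero, since $\int\sqrt{p_Rp_{-R}}\,\dif\mu=\exp(-[\kappa(R)+\kappa(-R)]/2)\to0$. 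So the separation $H^2(f_{g_j},f_{g_0})\gtrsim\epsilon_n^2$ that your Chernoff step needs is not established.

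The paper avoids this through its choice of representatives. For every ball of the net that contains \emph{some} $l\in\mathcal L$ with $H^2(f_l,f_{g_0})\ge\delta$, it fixes one such $l_{k,0}$. It then uses $l_{k,0}+2\varepsilon$ as the dominating function on $[-B,B]$. Separation therefore holds by construction, and no Hellinger continuity is needed. Moreover,
\[
\bE\sqrt{(l_{k,0}+2\varepsilon)(X)/l_{g_0}(X)}\le 1-\tfrac12H^2(f_{k,0},f_{g_0})+\sqrt{2\varepsilon},
\]
because $\int\sqrt{l_{g_0}}\,p_0\,\dif\mu\le1$ by Cauchy--Schwarz. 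So the constant envelope suffices, and your $p_0$-weighted norm and Legendre-duality envelope are unnecessary.

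If you want to keep your net centres, you must instead transfer separation explicitly:
\begin{enumerate}
\item Since $\int_{[-L_n,L_n]}|f_{\tilde g}-f_{g_j}|\,\dif\mu\le\eta$ and both densities have unit mass, their masses outside the window agree up to $\eta$.
\item Split $H^2(\cdot,f_{g_0})$ into the window and its complement.
\item Use that $\int_{|x|>L_n}f_{g_0}\,\dif\mu$ is polynomially small by \ref{(LT)}; this yields $H^2(f_{g_j},f_{g_0})\gtrsim\epsilon_n^2$.
\end{enumerate}
Your closing remark about the light tail of $f_{g_0}$ points in this direction. But this argument must replace the false closeness claim, not supplement it.
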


\begin{remark}
The above result is somewhat weaker than model-specific results, such as Theorem~1 of \citet{zhang2009generalized} for Gaussian location mixtures. Its main strength, however, is its generality: it shows that the nearly parametric convergence rate of the (approximate) NPMLE extends well beyond individual models. In this way, it highlights the statistical relevance of the approximate NPMLE constructed in Section~\ref{sec:data-compression}.
\end{remark}

\section{Conclusions and discussion}

Focusing on a broad class of exponential family mixture models, this paper makes two main contributions. First, we propose a data-compression strategy that changes the sample-size dependence of likelihood evaluations in NPMLE computation from linear to polylogarithmic order. Beyond its computational benefit, this perspective also has a clear theoretical implication: the fitting procedure depends only on the information contained in the first few moments. The same construction extends naturally to multivariate settings; see Section~\ref{sec:heter} for an illustration in the heteroscedastic Gaussian sequence model. More broadly, the idea applies whenever one seeks to evaluate expectations of smooth functions with respect to an empirical distribution.

Second, we establish an almost parametric rate for the NPMLE in marginal density estimation. In contrast to the classical theory of \citet{ghosal2001entropies,zhang2009generalized}, our analysis relies on entropy control for auxiliary classes beyond the marginal density class and uses complex-analytic arguments, leading to a more streamlined proof. The resulting theory not only provides statistical support for the approximate NPMLE developed in the first part of the paper, but also suggests a general framework for analyzing more complicated, including multivariate, models on a case-by-case basis.

\section*{Acknowledgement}

The author thanks Xin Bing, Jiaying Gu, Ricardo Baptista and Wenlong Mou for helpful discussions and Stanislav Volgushev for substantial feedback during the preparation of this manuscript.

\bibliographystyle{apalike}
\bibliography{ref}

\section{Supplement} \label{sec:supplement}

\subsection{Remarks on likelihood tolerance levels}\label{sec:tolerance}

The statistical requirement on the likelihood gap \(\Delta_n\) in \eqref{eq:LRT} depends on the task. If \(\Delta_n=o(1)\), then \(\tilde g\) is asymptotically indistinguishable from \(\hat g\) from the perspective of likelihood ratio testing, and is therefore suitable for certain testing procedures, such as homogeneity tests \citep{azais2009likelihood,gu2018testing}. Moreover, for the Gaussian location model, \citet{zhang2009generalized} and \citet{jiang2009general} showed that the plug-in estimators based on \(\tilde g\) achieve nearly parametric performance in both density estimation and empirical Bayes estimation whenever \(\Delta_n\) is of logarithmic order, that is, \(\Delta_n = O\bigl((\log n)^\gamma\bigr)\) for some \(\gamma>0\). An analogous result is also available for the Poisson model \citep{jana2025optimal}. For these two models, our recent work \citet{zhang2026adaptivity} further shows that the plug-in estimators based on \(\tilde g\) attain exact parametric performance in these estimation tasks when \(\Delta_n = O(1)\), provided that the true mixing distribution \(g_0\) is finitely discrete and \(\Theta_0\) is bounded.

\subsection{Heteroscedastic Gaussian sequence model}\label{sec:heter}

This section extends the data-compression strategy from Section~\ref{sec:data-compression} to the heteroscedastic Gaussian sequence model \citep{jiang2020general}. 

\begin{enumerate}
\renewcommand{\theenumi}{(HG)}
\renewcommand{\labelenumi}{\theenumi}
\item \label{(HG)}
Consider
\[
X_i\mid(\theta_i,\sigma_i^2)\overset{\mathrm{ind.}}{\sim}\mathcal{N}(\theta_i,\sigma_i^2),
\qquad
\theta_i\overset{\mathrm{i.i.d.}}{\sim} g_0,
\qquad
i=1,\dots,n,
\]
where \(\{(X_i,\sigma_i^2)\}_{i\in[n]}\) are observed, and \(g_0\) is interpreted as a common prior distribution belonging to the class \eqref{eq:mixing} with 
$$
\Theta_0 \coloneqq \{\theta\in\mathbb{R}: |\theta|\le M\}. 
$$
\end{enumerate}

Under \ref{(HG)}, the marginal density of \(X_i\) given \(\sigma_i^2\) is
\[
f_{g_0}(x|\tau_i)\coloneqq \sqrt{\frac{\tau_i}{2\pi}}\int \exp\Big(-\frac{\tau_i(x-\theta)^2}{2}\Big)\,\dif g_0(\theta),
\]
where \(\tau_i\coloneqq 1/\sigma_i^2\). 
The NPMLE in this setting is defined by
\[
\hat{g}\coloneqq \arg\sup_{g\in\mathcal{G}} \int \log f_g(x|\tau)\,\dif P_n(x,\tau),
\]
where \(P_n\) is the empirical measure based on \(\{(X_i,\tau_i)\}_{i\in[n]}\).

As before, given a sequence of integers \(\{J_n\}_{n\ge 1}\), let
\[
P_{J_n}\coloneqq \text{a quadrature rule matching all moments of }P_n\text{ up to total degree }J_n.
\]
We construct this quadrature rule via Carath\'eodory--Tchakaloff subsampling \citep{piazzon2016caratheodory}. This procedure produces a quadrature rule with at most \(\binom{J_n+2}{2}\) support points and satisfying
\[
\mathrm{supp}(P_{J_n})\subseteq \mathrm{supp}(P_n).
\]
We then define
\[
\hat{g}_{J_n}\coloneqq \arg\sup_{g\in\mathcal{G}} \int \log f_g(x|\tau)\,\dif P_{J_n}(x,\tau).
\]
The following theorem extends Theorem~\ref{thm:approximateNPMLE1} and Corollary~\ref{cor:order} to the heteroscedastic Gaussian sequence model.

\begin{theorem}\label{thm:approximateNPMLE-hetero}
Assume \ref{(HG)}. Suppose moreover that, for some \(T_0>0\),
\[
\tau_i\in[1/T_0,T_0], \qquad i\in[n],
\]
and that
\[
|X|_n = O_{\bP}\big((\log n)^{\alpha_0}\big)
\]
for some \(\alpha_0>0\). To ensure that
\[
\sum_{i=1}^n\Bigl(\log f_{\hat g}(X_i| \tau_i)-\log f_{\hat g_{J_n}}(X_i| \tau_i)\Bigr)\le n^{-\gamma}
\]
for some \(\gamma>0\), it suffices to take
\[
J_n=\Bigl\lceil C_{T_0}|X|_n(|X|_n\wedge M)\log\bigl(C_{T_0}n^{1+\gamma}|X|_n^6\bigr)\Bigr\rceil,
\]
where \(C_{T_0}>0\) is a constant depending only on \(T_0\). In particular,
\begin{itemize}
\item if \(M<\infty\), then
\[
J_n = O_{\mathbb P}\bigl((\log n)^{1+\alpha_0}\bigr);
\]
\item if \(M=\infty\), then
\[
J_n = O_{\mathbb P}\bigl((\log n)^{1+2\alpha_0}\bigr).
\]
\end{itemize}
\end{theorem}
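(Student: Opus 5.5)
The plan mirrors the proof of Theorem~\ref{thm:approximateNPMLE1}: reduce the likelihood gap to the error of a polynomial approximation to $x\mapsto\log f_g(x\mid\tau)$, now in two variables. Write $F(g)=\int\log f_g\,\dif P_n$ and $F_J(g)=\int\log f_g\,\dif P_{J_n}$. Since $\hat g_{J_n}$ maximizes $F_J$,
\[
F(\hat g)-F(\hat g_{J_n})\le \bigl[F(\hat g)-F_J(\hat g)\bigr]+\bigl[F_J(\hat g_{J_n})-F(\hat g_{J_n})\bigr]\le 2\sup_{g}\Bigl|\int \log f_g\,\dif(P_n-P_{J_n})\Bigr|,
\]
with the supremum over $g$ supported in $[-M',M']$ and $M'\coloneqq |X|_n\wedge M$. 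To justify this restriction, I would use the standard fact that the NPMLE is supported in the convex hull of the data. For the Gaussian kernel this holds here as well: moving mass from outside $[\min X_i,\max X_i]$ to the nearest endpoint increases every $f_g(X_i\mid\tau_i)$. For $\hat g_{J_n}$ the same argument applies because $\supp(P_{J_n})\subseteq\supp(P_n)$, which is guaranteed by the Carath\'eodory--Tchakaloff construction. This is precisely why that construction is chosen over a general cubature. Multiplying by $n$ then gives a bound of order $n$ times the sup-error.

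The exactness of $P_{J_n}$ on polynomials of total degree at most $J_n$ yields, for any such polynomial $q$,
\[
\Bigl|\int h\,\dif(P_n-P_{J_n})\Bigr|\le 2\sup_{(x,\tau)\in K}|h-q|,
\]
where $h=\log f_g(\cdot\mid\cdot)$ and $K=[-|X|_n,|X|_n]\times[1/T_0,T_0]$. Both $P_n$ and $P_{J_n}$ are probability measures supported in $K$. It therefore suffices to find a polynomial with sup-error at most $n^{-1-\gamma}/4$.

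To construct it, I would write
\[
\log f_g(x\mid\tau)=\tfrac12\log\tfrac{\tau}{2\pi}-\tfrac{\tau x^2}{2}+\log\int e^{\tau x\theta-\tau\theta^2/2}\,\dif g(\theta).
\]
The first two terms are handled separately. The $\tau x^2$ term is an exact polynomial. The term $\log\tau$ on $[1/T_0,T_0]$ is analytic, so it has geometric Chebyshev error at rate depending only on $T_0$. The main term is $\phi(x,\tau)=\log L_g(x,\tau)$, which extends to complex $(x,\tau)$ near $K$. I would use a polydisc or Bernstein-ellipse argument: map $K$ to $[-1,1]^2$ and consider the ellipse $E_\rho$ in each variable.

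The quantity to control is that $\mathrm{Re}$ of $L_g$ stays positive, so that the logarithm is analytic. For complex $x=x_0+iy$ with $|y|\lesssim 1/(T_0M')$, and $\tau$ perturbed by $O(1/(|X|_nM'))$, the phase $\tau y\theta+\mathrm{Im}(\tau)(x\theta-\theta^2/2)$ is at most $\pi/4$ uniformly over $|\theta|\le M'$. Hence $\mathrm{Re}\,L_g\ge c|L_g|$ and $|\log L_g|\lesssim |X|_nM'+1$ on the extended region. The natural ellipse parameter is then $\rho-1\asymp 1/(C_{T_0}|X|_nM')$, because the $x$-interval has half-length $|X|_n$ and the admissible imaginary width is about $1/M'$. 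The standard bivariate Chebyshev bound gives error
\[
\lesssim \frac{\sup|\log L_g|}{(\rho-1)^2}\,\rho^{-J}\lesssim C_{T_0}|X|_n^{\,c}M'^{\,c}\exp\bigl(-J/(C_{T_0}|X|_nM')\bigr).
\]
Setting this below $n^{-1-\gamma}$ gives exactly the stated
\[
J_n=\bigl\lceil C_{T_0}|X|_n(|X|_n\wedge M)\log\bigl(C_{T_0}n^{1+\gamma}|X|_n^6\bigr)\bigr\rceil,
\]
using $M'\le|X|_n$ to absorb the polynomial prefactors into $|X|_n^6$.

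The main obstacle is the uniform complex lower bound on $\mathrm{Re}\,L_g$ jointly in $x$ and $\tau$. The $\tau$-perturbation multiplies $x\theta-\theta^2/2$, which is of size $|X|_nM'$, and this forces the $\tau$-width to be $1/(|X|_nM')$. I would check that this still yields a single rate $\rho-1\asymp1/(|X|_nM')$ in the $\tau$ variable, which is harmless since the $\tau$-interval has fixed length.

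The two corollary rates then follow by plugging in $|X|_n=O_{\bP}((\log n)^{\alpha_0})$. When $M<\infty$, $M'\le M$ gives $(\log n)^{\alpha_0}\cdot\log n$. When $M=\infty$, $M'=|X|_n$ gives $(\log n)^{2\alpha_0}\cdot\log n$. In both cases the logarithm is $O(\log n)$.
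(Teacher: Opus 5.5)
Your proposal is correct and follows essentially the same route as the paper. You restrict the supports of $\hat g$ and $\hat g_{J_n}$ to $[-(|X|_n\wedge M),\,|X|_n\wedge M]$, reduce the likelihood gap to $4n$ times a total-degree polynomial approximation error on $[-|X|_n,|X|_n]\times[1/T_0,T_0]$, and control that error by extending $\log f_g$ analytically to a (translated) Bernstein polyellipse with $\rho-1\asymp 1/\bigl(|X|_n(|X|_n\wedge M)\bigr)$ in both variables, then applying Theorem~\ref{thm:Bernstein-multi}.

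The differences are only cosmetic. You prove the support restriction directly instead of citing Proposition~25 of \citet{lindsay1995mixture}. You also split off $\tfrac12\log(\tau/2\pi)$ and $-\tau x^2/2$ rather than extending the whole log-density. Finally, for total degree in two variables the geometric factor is $\rho^{-(\lfloor J/2\rfloor+1)}$ rather than $\rho^{-J}$, but this is absorbed into $C_{T_0}$.
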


As shown in \citet{jiang2020general}, a likelihood guarantee of this form for $\hat{g}_{J_n}$ is already sufficient to obtain empirical Bayes risk guarantees.

\subsection{Simulations}\label{sec:simulations}

\begin{algorithm}[t]
\caption{Construction of the approximate NPMLE}
\label{alg:gaussian-quadrature}
\begin{algorithmic}[1]
\REQUIRE A probability measure \(P_n\) and a positive integer \(J_n\)
\STATE Compute the first \(2J_n-1\) moments of \(P_n\)
\STATE Construct a discrete probability measure \(P_{J_n}\), supported on \(J_n\) points, that matches these moments; for example, via the Golub--Welsch algorithm \citep{golub1969calculation}
\STATE Compute the maximizer in \eqref{eq:hatgJn}
\ENSURE An approximate NPMLE \(\hat g_{J_n}\)
\end{algorithmic}
\end{algorithm}

In this section, we consider the Gaussian location mixture model \ref{(GL)} and generate data $X_1,\dots,X_n$ from $f_{g_0}$, where $g_0$ is the uniform distribution on the interval $[-2,2]$. To approximate $\hat{g}$ and $\hat{g}_{J_n}$, we adopt a discretization approach, under which all computations are restricted to mixing distributions supported on a finite set of fixed grid points. In practice, we take 300 points uniformly spaced over the interval
\[
[\min_{i\in[n]} X_i,\ \max_{i\in[n]} X_i].
\]

Since \citet{koenker2014convex}, the interior-point method has been the standard implementation under this framework, and it is also adopted by the \texttt{R} package \texttt{REBayes} for empirical Bayes applications \citep{koenker2017rebayes}. More recently, \citet{zhang2024efficient} proposed an augmented Lagrangian method, which implicitly exploits the sparsity structure of the NPMLE \citep{polyanskiy2020self} and thereby substantially accelerates computation. We therefore compare the following three methods:
\begin{itemize}
    \item[(IPM)] Approximate the NPMLE \eqref{eq:NPMLE} using the interior-point method.
    \item[(ALM)] Approximate the NPMLE \eqref{eq:NPMLE} using the augmented Lagrangian method.
    \item[(Ours)] Implement Algorithm~\ref{alg:gaussian-quadrature}, while approximating \eqref{eq:hatgJn} using the interior-point method.
\end{itemize}

Algorithm~\ref{alg:gaussian-quadrature} is not a direct competitor to (IPM) or (ALM), but rather a complementary procedure containing an additional preprocessing step. In principle, our approach could also be combined with the augmented Lagrangian method to further reduce computation time. In our framework, however, the main computational bottleneck is the preprocessing step, so such a combination yields only modest additional gains. That said, this hybrid approach may still be useful when one wishes to compute the NPMLE under multiple models on the same dataset, as is often the case when comparing several competing specifications.

We consider the case $n=100{,}000$, set $J_n=25$ for our method, and repeat the experiment 10 times. Let $\tilde{g}$ denote a resulting estimator. Figure~\ref{fig:comparison} compares the three methods in terms of computation time (in seconds), log-likelihood $\ell_n(f_{\tilde{g}})$, and the sum of squared errors
\[
\sum_{i=1}^n \left(\theta_i-\frac{\int \theta\, p(X_i | \theta)\,\dif \tilde{g}(\theta)}{f_{\tilde{g}}(X_i)}\right)^2,
\]
where $\theta_i$ is the latent variable associated with $X_i$ under the data-generating model
\[
X_i \mid \theta_i \overset{\mathrm{ind.}}{\sim} \mathcal{N}(\theta_i,1),
\qquad
\theta_i \overset{\mathrm{i.i.d.}}{\sim} g_0,
\qquad
i=1,\dots,n.
\]
The figure shows that our approach is substantially faster than the other two methods, while achieving statistically indistinguishable performance.

\begin{figure}[htbp]
    \centering
    \includegraphics[width=0.32\textwidth]{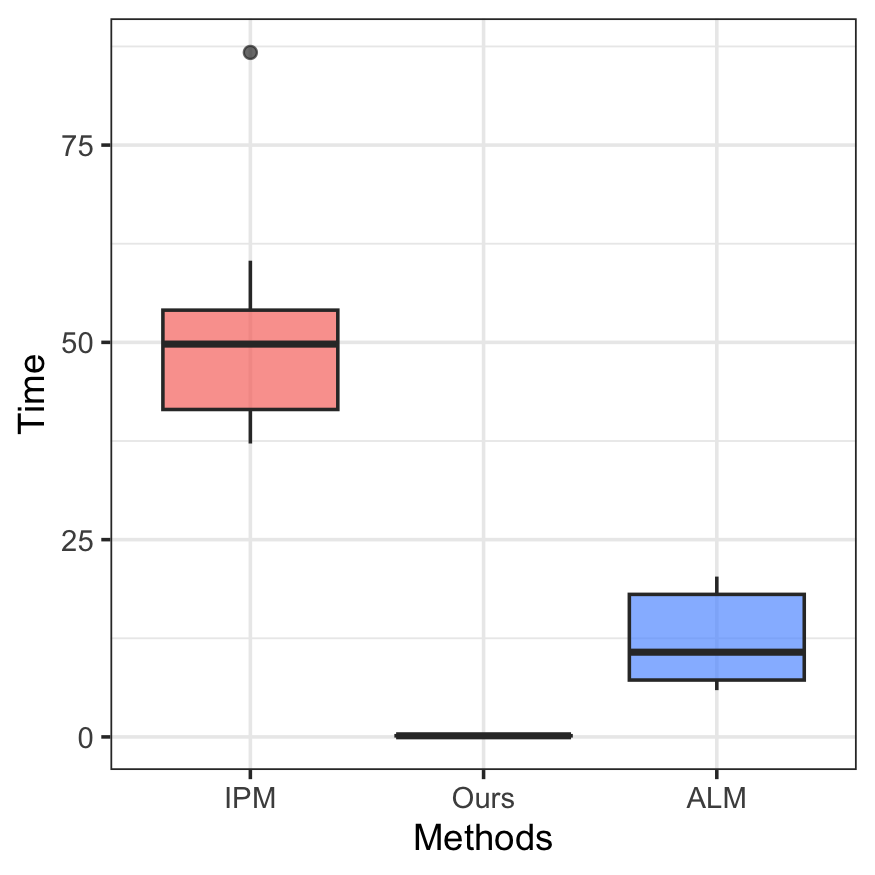}
    \hfill
    \includegraphics[width=0.32\textwidth]{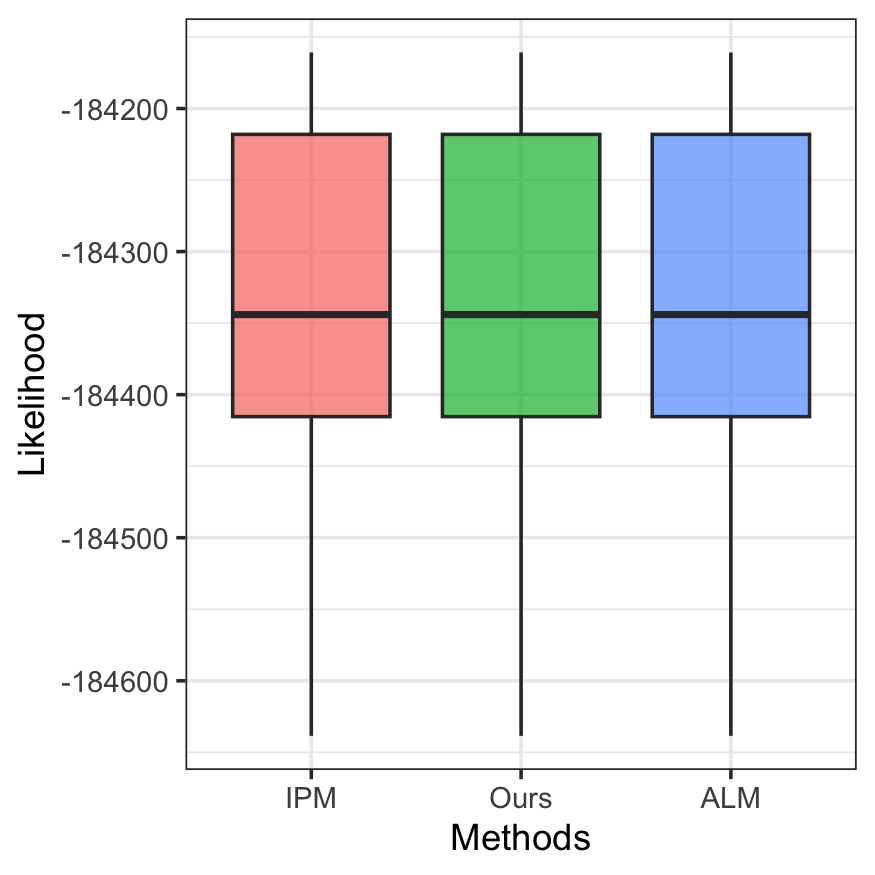}
    \hfill
    \includegraphics[width=0.32\textwidth]{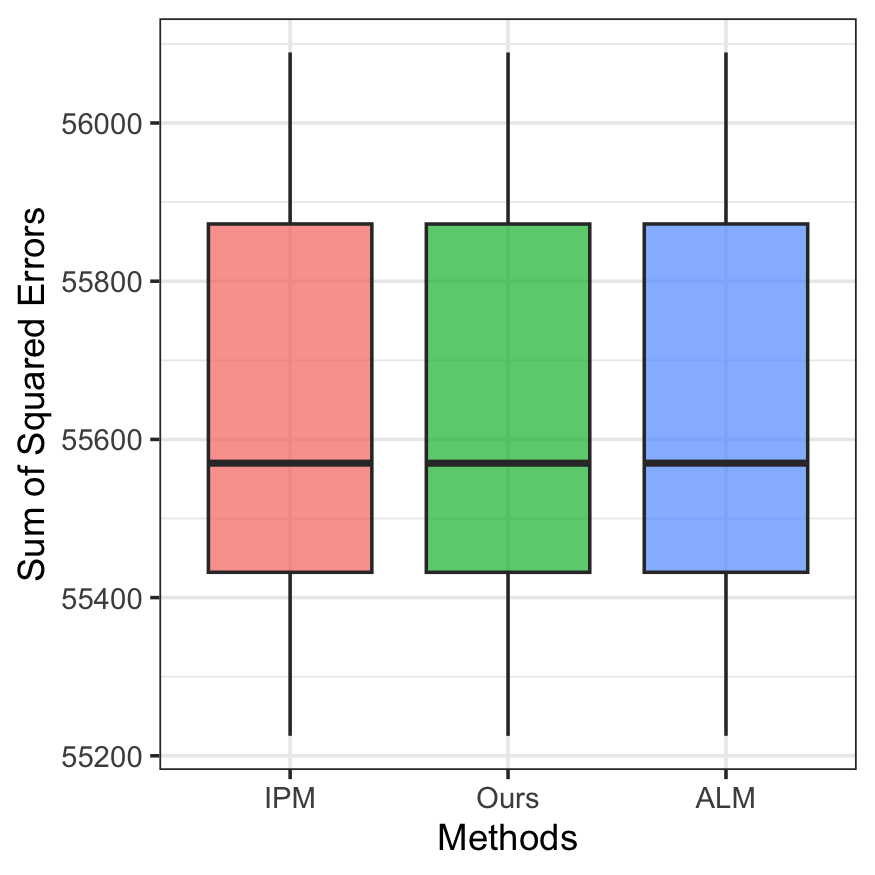}
    \caption{Comparison of IPM, Ours, and ALM in terms of computation time, log-likelihood, and sum of squared errors in empirical Bayes estimation.}
    \label{fig:comparison}
\end{figure}

\subsection{Bernstein's theorem}

It was first shown in §61 of \citet{bernstein1912ordre} that analytic functions admit polynomial approximations with exponential convergence. A modern treatment appears as Theorem 8.2 of \citet{trefethen2019approximation}, which we restate below. 

\begin{theorem}\label{thm:Bernstein}
Fix $B>0$ and let $h:[-B,B]\to\mathbb{R}$ admit an analytic continuation to the Bernstein ellipse
\[
E^{(B)}_\rho \coloneqq \bigg\{ z \in \mathbb{C} : 
z=\frac{B}{2}\Big( w + \frac{1}{w} \Big),\ \frac{1}{\rho}< |w| < \rho \bigg\},
\qquad \rho>1.
\]
Then for any $J\in\mathbb{N}$, the degree-$J$ Chebyshev projection of $h$
\[
\mathrm{Che}_J[h]\coloneqq \sum_{k=0}^J c_k\, T_k
\]
satisfies $|c_k|\le 2C_h/\rho^k$, where
\[
C_h\coloneqq \sup_{z\in E^{(B)}_\rho}|h(z)|,
\]
and $T_k$ denotes the (scaled) Chebyshev polynomial defined by
\begin{equation}\label{eq:Chebyshev}
T_k(B\cos\theta)\coloneqq \cos(k\theta).
\end{equation}
Moreover,
\begin{equation}\label{eq:Bernstein}
\|h-\mathrm{Che}_J[h]\|_{\infty,B}
\le \frac{2C_h}{(\rho-1)\rho^{J}},
\end{equation}
where $\|\cdot\|_{\infty,B}$ denotes the supremum norm on $[-B,B]$.
\end{theorem}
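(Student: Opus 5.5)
We will follow the classical Joukowski-map argument, as in \citet{trefethen2019approximation}. The main step is to identify Chebyshev coefficients with Laurent coefficients and then apply Cauchy's estimate. Throughout we may assume \(C_h<\infty\), since otherwise there is nothing to prove.

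\emph{Step 1 (lifting to an annulus).} Set
\[
F(w)\coloneqq h\bigl(\tfrac{B}{2}(w+w^{-1})\bigr)
\]
on the annulus \(A_\rho\coloneqq\{1/\rho<|w|<\rho\}\). The Joukowski map \(w\mapsto \frac{B}{2}(w+w^{-1})\) sends \(A_\rho\) into \(E^{(B)}_\rho\). Hence \(F\) is analytic on \(A_\rho\), satisfies \(|F|\le C_h\) there, and obeys \(F(w)=F(1/w)\). Consequently \(F\) has a Laurent expansion \(F(w)=\sum_{k\in\mathbb Z}a_k w^k\) with \(a_{-k}=a_k\). This expansion converges absolutely and uniformly on compact subannuli, in particular on \(|w|=1\).

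\emph{Step 2 (identifying coefficients).} On \(|w|=1\), write \(w=e^{it}\). Then \(\frac{B}{2}(w+w^{-1})=B\cos t\), and by \eqref{eq:Chebyshev} we have \(\frac12(w^k+w^{-k})=\cos(kt)=T_k(B\cos t)\). Pairing \(k\) with \(-k\) in the Laurent series therefore gives
\[
h(B\cos t)=a_0+\sum_{k\ge1}2a_k\,T_k(B\cos t).
\]
By orthogonality of \(\cos(kt)\) on \([0,\pi]\), the Chebyshev projection coefficients are \(c_0=a_0\) and \(c_k=2a_k\) for \(k\ge1\). The series \(\sum_k c_kT_k\) thus converges uniformly to \(h\) on \([-B,B]\).

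\emph{Step 3 (Cauchy estimate).} For any \(1<r<\rho\), Cauchy's formula on the circle \(|w|=r\) gives
\[
|a_k|=\Bigl|\frac{1}{2\pi i}\oint_{|w|=r}F(w)\,w^{-k-1}\,\mathrm{d}w\Bigr|\le C_h\,r^{-k}.
\]
Letting \(r\uparrow\rho\) yields \(|a_k|\le C_h\rho^{-k}\). Hence \(|c_k|\le 2C_h\rho^{-k}\) for \(k\ge1\), and trivially \(|c_0|\le C_h\).

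\emph{Step 4 (truncation error).} Since \(|T_k|\le1\) on \([-B,B]\) and the expansion from Step 2 converges uniformly,
\[
\|h-\mathrm{Che}_J[h]\|_{\infty,B}\le\sum_{k>J}|c_k|\le 2C_h\sum_{k>J}\rho^{-k}=\frac{2C_h\rho^{-J-1}}{1-\rho^{-1}}=\frac{2C_h}{(\rho-1)\rho^{J}},
\]
which is \eqref{eq:Bernstein}.

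The only delicate point is Step 2. It requires justifying that the Laurent series converges uniformly on the unit circle, so that the symmetric Laurent coefficients coincide with the Chebyshev coefficients. This follows because the unit circle lies strictly inside the open annulus \(A_\rho\), where \(F\) is analytic.
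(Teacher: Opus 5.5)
Your proposal is correct and takes essentially the same approach as the paper. The paper cites Theorem~8.2 of \citet{trefethen2019approximation} for this statement, and its own proof of the multivariate analogue, Theorem~\ref{thm:Bernstein-multi}, uses exactly your argument: a Joukowski lift to an annulus, symmetric Laurent coefficients identified with Chebyshev coefficients, Cauchy estimates on circles of radius $r\uparrow\rho$, and summation of the geometric tail.
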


Theorem~\ref{thm:Bernstein} extends to the multivariate setting; see \citet{trefethen2017multivariate} and \citet{wang2020analysis}. We record a convenient formulation and include a brief proof for completeness.

\begin{theorem}\label{thm:Bernstein-multi}
Fix $B \coloneqq (B_1,\dots,B_d)$ with $B_l>0$ for all $l\in[d]$. Let
\[
h:\prod_{l=1}^d [-B_l,B_l]\to\mathbb R
\]
admit an analytic continuation to the Bernstein polyellipse
\[
\mathcal E_\rho^{(B)} \coloneqq \prod_{l=1}^d E_{\rho_l}^{(B_l)},
\]
where $\rho\coloneqq(\rho_1,\dots,\rho_d)$ and $\rho_l>1$ for all $l\in[d]$. Then, for every $J\in\mathbb N$, there exists a polynomial $\mathrm{poly}_J[h]$ of total degree at most $J$ such that
\begin{equation}\label{eq:Bernstein-multi}
\|h-\mathrm{poly}_J[h]\|_{\infty,B}
\le
2^d
\Big( \sup_{z\in\mathcal E_\rho^{(B)}} |h(z)| \Big)
\Big( \prod_{l=1}^d \frac{\rho_l}{\rho_l-1} \Big)
\Big( \sum_{l=1}^d \frac{1}{\rho_l^{\lfloor J/d\rfloor+1}} \Big),
\end{equation}
where $\|\cdot\|_{\infty,B}$ denotes the supremum norm on $\prod_{l=1}^d [-B_l,B_l]$, and \(\lfloor x\rfloor\) denotes the largest integer less than or equal to \(x\in\mathbb R\).
\end{theorem}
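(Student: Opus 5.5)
We prove Theorem~\ref{thm:Bernstein-multi} with a tensor-product Chebyshev expansion, truncated to a hyperbolic-cross index set in place of the one-dimensional Chebyshev projection.

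\emph{Step 1: coefficient bounds.} Write $h(x)=\sum_{k\in\mathbb N^d} c_k\prod_{l=1}^d T_{k_l}(x_l)$, where $T_{k_l}$ is the scaled Chebyshev polynomial \eqref{eq:Chebyshev} on $[-B_l,B_l]$. Substitute $x_l=\tfrac{B_l}{2}(w_l+w_l^{-1})$ with $|w_l|=1$. Then $c_k$ is, up to a factor $2^{\#\{l:k_l>0\}}\le 2^d$, the Laurent coefficient of $H(w)\coloneqq h\big(\tfrac{B_1}{2}(w_1+w_1^{-1}),\dots\big)$ at $w^{k}$. By hypothesis $H$ is analytic on the annulus product $\prod_l\{\rho_l^{-1}<|w_l|<\rho_l\}$. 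Shift each contour to $|w_l|=r_l<\rho_l$ and let $r_l\uparrow\rho_l$; the multivariate Cauchy estimate then gives
\[
|c_k|\le 2^d S\prod_{l=1}^d\rho_l^{-k_l},\qquad S\coloneqq\sup_{z\in\mathcal E_\rho^{(B)}}|h(z)|.
\]
This is the $d$-variate analogue of the coefficient bound in Theorem~\ref{thm:Bernstein}, proved by the same argument applied one variable at a time. It also gives absolute convergence of the series on the real box, so the expansion represents $h$ uniformly there.

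\emph{Step 2: truncation.} Set $m\coloneqq\lfloor J/d\rfloor$ and let $\mathrm{poly}_J[h]\coloneqq\sum_{k:\,\max_l k_l\le m}c_k\prod_l T_{k_l}$. Since $dm\le J$, this polynomial has total degree at most $J$. Because $|T_{k_l}|\le1$ on $[-B_l,B_l]$, the sup-norm error is at most $\sum_{k:\max_l k_l\ge m+1}|c_k|$. A multi-index with $\max_l k_l\ge m+1$ has some coordinate $l$ with $k_l\ge m+1$, so a union bound over $l$ gives
\[
\sum_{\max k\ge m+1}\prod_{j}\rho_j^{-k_j}\le\sum_{l=1}^d\Big(\sum_{k_l\ge m+1}\rho_l^{-k_l}\Big)\prod_{j\ne l}\frac{\rho_j}{\rho_j-1}
=\sum_{l=1}^d\frac{\rho_l^{-(m+1)}\rho_l}{\rho_l-1}\prod_{j\ne l}\frac{\rho_j}{\rho_j-1}.
\]
The right-hand side equals $\prod_j\frac{\rho_j}{\rho_j-1}\sum_l\rho_l^{-(m+1)}$. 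Multiplying by the factor $2^dS$ from Step~1 yields exactly \eqref{eq:Bernstein-multi}.

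\emph{Main obstacle.} The only delicate point is Step~1. First, the Chebyshev–Laurent correspondence must hold in each variable simultaneously, with the constant $2$ from non-constant modes taken per coordinate, giving $2^d$ overall. Second, $h$ is analytic only on the \emph{open} polyellipse, so the Cauchy estimate is applied on $|w_l|=r_l<\rho_l$ and one then takes the limit, using that the supremum over the smaller polyellipse is at most $S$. If $S=\infty$ the bound is vacuous, so we may assume $S<\infty$. Step~2 is routine geometric-series bookkeeping.
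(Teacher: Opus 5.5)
Your proof is correct and follows essentially the same route as the paper: you use the Joukowski substitution onto the polyannulus, Laurent/Cauchy estimates giving $|c_k|\le 2^d S\prod_l \rho_l^{-k_l}$, and a union bound over coordinates for the tail. The only difference is cosmetic: you truncate to the cube $\{k:\max_l k_l\le\lfloor J/d\rfloor\}$ (a full tensor-product set, not a hyperbolic cross as your opening line says), whereas the paper truncates at total degree $|k|\le J$; both tails lie in $\{k:\max_l k_l>\lfloor J/d\rfloor\}$, so the same union bound gives the identical estimate.
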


\begin{proof}
By rescaling each coordinate, we may assume without loss of generality that
$
B_1=\cdots=B_d=1.
$
Define
$$
\tilde h(w)\coloneqq h(z),
\qquad
z_l=\frac{1}{2}\Big(w_l+\frac{1}{w_l}\Big), \quad l\in[d],
$$
and consider $\tilde h$ on the polyannulus
$$
\mathcal A\coloneqq \prod_{l=1}^d \Bigl\{ w_l\in\mathbb C:\frac{1}{\rho_l}<|w_l|<\rho_l \Bigr\}.
$$
Since $\tilde h$ is analytic on $\mathcal A$, Theorem 2 on p.~35 of \cite{shabat1992introduction} yields that it admits a Laurent series expansion
$$
\tilde h(w)=\sum_{\nu\in\mathbb Z^d} a_\nu w^\nu,
$$
where
$$
a_\nu=\frac{1}{(2\pi i)^d}
\int_{\prod_{l=1}^d\{u_l\in\mathbb C:|u_l|=r_l\}}
\frac{\tilde h(u)}{u^{\nu+1}}\,du
$$
for any $r_l\in(1,\rho_l)$, $l\in[d]$. 

Now $\tilde h$ has the symmetry
$$
\tilde h(w)=\tilde h(\tilde w)
\qquad\text{whenever}\qquad
\tilde w_l\in\{w_l,w_l^{-1}\}
\ \text{for all } l\in[d].
$$
Hence the Laurent coefficients satisfy
$
a_\nu=a_{\nu'}
$
whenever $\nu'$ is obtained from $\nu$ by changing the sign of any subset of coordinates. Therefore, regrouping the Laurent series gives
$$
\tilde h(w)=\sum_{\alpha\in\mathbb N^d} c_\alpha \tilde q_\alpha(w),
$$
where
$$
\tilde q_\alpha(w)\coloneqq \prod_{l=1}^d \frac{w_l^{\alpha_l}+w_l^{-\alpha_l}}{2},
\qquad
c_\alpha\coloneqq 2^{s(\alpha)} a_\alpha,
$$
and
$
s(\alpha)\coloneqq \#\{l\in[d]:\alpha_l>0\}.
$

Now define
$$
q_\alpha(z)\coloneqq \tilde q_\alpha(w),
$$
where $w$ is any vector satisfying $z_l=(w_l+w_l^{-1})/2$ for each $l\in[d]$. This is well defined because of the symmetry of $\tilde q_\alpha$. Hence, 
$$
h(z)=\sum_{\alpha\in\mathbb N^d} c_\alpha q_\alpha(z).
$$
For $x\in[-1,1]^d$, 
$$
q_\alpha(x)=\prod_{l=1}^d T_{\alpha_l}(x_l),
$$
where $T_k$ is the Chebyshev polynomial \eqref{eq:Chebyshev}; see, for example, equation (3.8) in \citet{trefethen2019approximation}. So, 
$$
\|q_{\alpha}\|_{\infty,B}\le1. 
$$

Let
$$
C_h\coloneqq \sup_{z\in\mathcal E_\rho^{(B)}} |h(z)|.
$$
From the integral representation, for any $\alpha\in\bN^d$, 
$$
|c_\alpha|
=
2^{s(\alpha)} |a_\alpha|
\le
\frac{2^d C_h}{r^\alpha}.
$$

Combining these bounds, for any $J\in\mathbb N$, we have
$$
\begin{aligned}
\Big\|h-\sum_{|\alpha|\le J} c_\alpha q_\alpha\Big\|_{\infty,B}
&\le
\sum_{|\alpha|>J} |c_\alpha|\\
&\le
\sum_{l=1}^d \sum_{\alpha_l>\lfloor J/d\rfloor} |c_\alpha|\\
&\le
2^d C_h
\sum_{l=1}^d
\sum_{\alpha_l>\lfloor J/d\rfloor} \frac{1}{r^\alpha}\\
&=
2^d C_h
\Bigl( \prod_{l=1}^d \frac{r_l}{r_l-1} \Bigr)
\Bigl( \sum_{l=1}^d \frac{1}{r_l^{\lfloor J/d\rfloor+1}} \Bigr).
\end{aligned}
$$
Letting $r_l\uparrow \rho_l$ for all $l\in[d]$, we obtain \eqref{eq:Bernstein-multi}.
\end{proof}

\subsection{Proof of main results in Section~\ref{sec:data-compression}}

Under \ref{(EF)}, 
for any \(g \in \mathcal{G}\) with compact support, it follows from Morera's theorem (Theorem 5.1 of \citet{stein2010complex}) that the likelihood ratio function \(l_g(x)\) extends to an entire function. In addition, \(\log l_g(x)\) admits an analytic extension to a complex neighborhood of the real line. For \(z = x + iy \in \mathbb{C}\), we denote by \(\Re z \coloneqq x\) and \(\Im z \coloneqq y\) its real and imaginary parts.

\begin{lemma}\label{lem:analytic-log-density}
Assume \ref{(EF)}. Take some $g\in\cG$ and assume $M_g\coloneqq\sup_{\theta\in\supp(g)}|\theta|<\infty$. For any $B>0$, set
$$
\rho=\frac{\pi}{4BM_g}+\sqrt{1+\Big(\frac{\pi}{4BM_g}\Big)^2}. 
$$
Then $\log l_g(x)$ extends analytically to the Bernstein ellipse $E_{\rho}^{(B)}$. Moreover, 
$$
\sup_{z\in E_\rho^{(B)}}|\log l_g(z)|\le CBM_g+\sup_{|\theta|\le M_g}|\kappa(\theta)|,
$$
for a universal constant $C>0$, provided $BM_g\ge1$. 
\end{lemma}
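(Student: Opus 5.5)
The plan is to show that $l_g(z)$ has no zeros on the Bernstein ellipse and that $\log l_g$ on it can be controlled through a lower bound on $\Re\, l_g(z)$. The analytic continuation is
\[
l_g(z)=\int \exp\bigl(\theta z-\kappa(\theta)\bigr)\,\dif g(\theta),
\]
which is entire because $\supp(g)\subseteq[-M_g,M_g]$ is compact.

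First I locate the ellipse $E_\rho^{(B)}$. With $w=re^{i\phi}$ and $1/\rho<r<\rho$, the point $z=\frac B2(w+w^{-1})$ satisfies
\[
|\Im z|\le \tfrac B2\Bigl(\rho-\tfrac1\rho\Bigr),\qquad |\Re z|\le \tfrac B2\Bigl(\rho+\tfrac1\rho\Bigr).
\]
The given $\rho$ solves $\rho-1/\rho=\pi/(2BM_g)$, so $|\Im z|\le \pi/(4M_g)$. For $|\theta|\le M_g$ this gives $|\theta\,\Im z|\le\pi/4$.

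Second, I bound the real part from below. Writing $z=x+iy$,
\[
\Re\, l_g(z)=\int e^{\theta x-\kappa(\theta)}\cos(\theta y)\,\dif g(\theta)\ \ge\ \cos(\pi/4)\, l_g(x)>0 .
\]
Hence $l_g$ maps $E_\rho^{(B)}$ into the right half-plane. Since $E_\rho^{(B)}$ is simply connected (it is the ellipse interior, containing $[-B,B]$), the principal branch $\log l_g(z)$ is analytic there and agrees with the real logarithm on $[-B,B]$. Its imaginary part satisfies $|\Im\log l_g(z)|=|\arg l_g(z)|\le \pi/4$ by the same cosine bound.

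Third, I bound the real part of the logarithm, $\log|l_g(z)|$. For the upper bound, $|l_g(z)|\le l_g(x)\le \sup_{|\theta|\le M_g}e^{\theta x-\kappa(\theta)}$. For the lower bound, $|l_g(z)|\ge \Re\, l_g(z)\ge 2^{-1/2}l_g(x)\ge 2^{-1/2}\inf_{|\theta|\le M_g}e^{\theta x-\kappa(\theta)}$. It remains to control $|\theta x|$. We have $|x|\le \frac B2(\rho+1/\rho)=\frac B2\sqrt{(\rho-1/\rho)^2+4}$, so
\[
|\theta x|\le M_g B\sqrt{1+\bigl(\pi/(4BM_g)\bigr)^2}\le \sqrt{1+\pi^2/16}\;BM_g ,
\]
where the last step uses $BM_g\ge1$. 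Therefore $|\log|l_g(z)||\le c\,BM_g+\sup_{|\theta|\le M_g}|\kappa(\theta)|+\tfrac12\log 2$. Adding the bound $\pi/4$ on the imaginary part and absorbing the constants $\tfrac12\log2+\pi/4$ into $C\,BM_g$ (again using $BM_g\ge1$) gives the claimed estimate.

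The main obstacle is the branch-selection step: making sure that $l_g$ does not vanish and that a single-valued analytic logarithm exists. The right half-plane bound together with the simple connectivity of the ellipse settles it. The remaining steps are elementary geometry of the ellipse.
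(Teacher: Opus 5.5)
Your proof is correct and follows essentially the same route as the paper. Both arguments use $\rho-1/\rho=\pi/(2BM_g)$ to get $|\theta\,\Im z|\le\pi/4$, deduce $\Re\, l_g(z)>0$ so the principal logarithm applies, and then bound $\log|l_g(z)|$ above and below via $|\theta\,\Re z|\le \sqrt{1+\pi^2/16}\,BM_g$ and the factor $\cos(\pi/4)$, plus a bounded argument term absorbed using $BM_g\ge 1$. The only differences are cosmetic: you bound the argument by $\pi/4$ instead of the paper's $\pi/2$, and your appeal to simple connectivity is unnecessary since composing with the principal branch on the right half-plane already suffices.
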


\begin{proof}
For $z\in\bC$, we write
$$
\begin{aligned}
l_g(z)&=\int\exp\big(\theta z-\kappa(\theta)\big)\dif g(\theta)\\
&=\int\exp\big(\Re z\theta-\kappa(\theta)\big)\big\{\cos(\Im z\theta)+i\sin(\Im z\theta)\big\}\dif g(\theta),
\end{aligned}
$$
and we have $\Re[l_g(z)]>0$ whenever $|\Im z|<\pi/(2M_g)$. So, taking the principal branch of the logarithm, the function $\log l_g(z)$ is holomorphic in $E_{\rho}^{(B)}$. 

Fix an arbitrary $z\in E_{\rho}^{(B)}$, we notice that
$$
\exp\Big(-|\Re z|M_g-\sup_{|\theta|\le M_g}|\kappa(\theta)|\Big)\cos\big(|\Im z|M_g\big)\le|l_g(z)|\le \exp\Big(|\Re z|M_g+\sup_{|\theta|\le M_g}|\kappa(\theta)|\Big). 
$$
Therefore, 
$$
\begin{aligned}
|\log l_g(z)|&\le |\Re z|M_g+\sup_{|\theta|\le M_g}|\kappa(\theta)|+\log\frac{1}{\cos\big(|\Im z|M_g\big)}+\frac{\pi}{2}\\
&\le BM_g\sqrt{1+\left(\frac{\pi}{4BM_g}\right)^2}+\sup_{|\theta|\le M_g}|\kappa(\theta)|+\frac{\log2}{2}+\frac{\pi}{2}\\
&\le CBM_g+\sup_{|\theta|\le M_g}|\kappa(\theta)|. 
\end{aligned}
$$
This completes the proof. 
\end{proof}

Because the exact estimator $\hat{g}_{J_n}$ in \eqref{eq:hatgJn} is often intractable due to the infinite-dimensional nature of the set $\cG$, an approximate version is needed. Therefore, assuming the upper bound in \eqref{eq:bound} is available, we restrict our focus to approximate estimators
\begin{equation}\label{eq:tildegJn}
\tilde{g}_{J_n}\in\Big\{g\in\cG:\supp(g)\subseteq[-M_n,M_n],\int\log l_g(x)\dif P_{J_n}(x)\ge\int\log l_{\hat{g}_{J_n}}(x)\dif P_{J_n}(x)-\varepsilon_n\Big\}, 
\end{equation}
where $\{\varepsilon_n\}_{n\ge1}$ is a sequence of tolerance levels. Accordingly, we present a theorem that accounts for this additional approximation error, yielding Theorem~\ref{thm:approximateNPMLE1} as a special case when $\varepsilon_n\equiv0$. 

\begin{theorem}\label{thm:approximateNPMLE2}
Assume \ref{(EF)}. Let \(\{M_n\}_{n\ge1}\) be a sequence of positive numbers such that \eqref{eq:bound} holds. Assume that \(|X|_n M_n \ge 1\). Then the likelihood criterion \eqref{eq:LRT} holds for $\tilde{g}=\tilde{g}_{J_n}$ provided that $\varepsilon_n\le\Delta_n/(2n)$, and
\[
J_n
=
\bigg\lceil
2|X|_nM_n
\log\Big(
\frac{Cn|X|_nM_n(|X|_nM_n+\sup_{|\theta|\le M_n}|\kappa(\theta)|)}{\Delta_n}
\Big)
\bigg\rceil,
\]
where \(C>0\) is a universal constant.
\end{theorem}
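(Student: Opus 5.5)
Let \(F(g)\coloneqq\int\log l_g\,\dif P_n\) and \(F_J(g)\coloneqq\int\log l_g\,\dif P_{J_n}\). Since \(\ell_n(f_g)=nF(g)+\sum_i\log p_0(X_i)\), the criterion \eqref{eq:LRT} is equivalent to \(F(\hat g)-F(\tilde g_{J_n})\le\Delta_n/n\). We split this gap as
\[
F(\hat g)-F(\tilde g_{J_n})
=\bigl[F(\hat g)-F_J(\hat g)\bigr]+\bigl[F_J(\hat g)-F_J(\tilde g_{J_n})\bigr]+\bigl[F_J(\tilde g_{J_n})-F(\tilde g_{J_n})\bigr].
\]
The middle term is at most \(F_J(\hat g_{J_n})-F_J(\tilde g_{J_n})\le\varepsilon_n\le\Delta_n/(2n)\), since \(\hat g_{J_n}\) maximizes \(F_J\) over \(\mathcal G\) and \(\tilde g_{J_n}\) satisfies \eqref{eq:tildegJn}. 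It therefore suffices to show that
\[
\sup_{g\in\mathcal G,\ \supp(g)\subseteq[-M_n,M_n]}\bigl|F(g)-F_J(g)\bigr|\le\frac{\Delta_n}{4n}.
\]
Both \(\hat g\) and \(\tilde g_{J_n}\) belong to this class: \(\hat g\) by \eqref{eq:bound}, and \(\tilde g_{J_n}\) by the definition \eqref{eq:tildegJn}.

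To prove the uniform bound, set \(B=|X|_n\). Note that \(P_n\) is supported in \([-B,B]\), and so is \(P_{J_n}\), because Gaussian quadrature nodes lie in the convex hull of the support. Fix \(g\) with \(M_g\le M_n\). The two measures share moments up to degree \(2J_n-1\), so for any polynomial \(q\) of degree at most \(2J_n-1\) we have \(\int q\,\dif(P_n-P_{J_n})=0\). Taking \(q=\mathrm{Che}_{2J_n-1}[\log l_g]\) gives
\[
|F(g)-F_J(g)|\le 2\,\|\log l_g-q\|_{\infty,B}.
\]
We bound the right-hand side with Theorem~\ref{thm:Bernstein}. The inputs come from Lemma~\ref{lem:analytic-log-density}, applied with \(M_n\) in place of \(M_g\): the ellipse \(E_\rho^{(B)}\) with \(\rho=a+\sqrt{1+a^2}\) and \(a=\pi/(4BM_n)\) still lies in the strip \(|\Im z|<\pi/(2M_g)\), and \(\sup|\kappa|\) over \([-M_n,M_n]\) dominates that over \([-M_g,M_g]\). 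This yields
\[
\|\log l_g-q\|_{\infty,B}\le\frac{2C_h}{(\rho-1)\rho^{2J_n-1}},\qquad C_h\le CBM_n+\sup_{|\theta|\le M_n}|\kappa(\theta)|,
\]
uniformly in \(g\), where we used \(BM_n\ge1\).

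Next we make this explicit in \(J_n\). Since \(a\le\pi/4<1\), we have \(\log\rho=\operatorname{arcsinh}(a)\ge c\,a\) and \(\rho-1\ge a\). Hence
\[
\rho^{-(2J_n-1)}\le \rho\, e^{-2cJ_n\pi/(4BM_n)}.
\]
If \(c\) is large enough this is \(\exp(-J_n/(2BM_n))\cdot O(1)\). The exact constant \(2\) in the formula for \(J_n\) requires checking how \(\operatorname{arcsinh}(a)\) compares with \(a\); any shortfall is absorbed by adjusting \(C\), or by a slight enlargement of \(B\). The total error is then
\[
|F(g)-F_J(g)|\lesssim BM_n\bigl(BM_n+\sup|\kappa|\bigr)\exp\bigl(-c'J_n/(BM_n)\bigr).
\]
The prefactor \(1/(\rho-1)\lesssim BM_n\) accounts for the extra factor \(|X|_nM_n\) inside the logarithm. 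Requiring this quantity to be at most \(\Delta_n/(4n)\) and solving for \(J_n\) gives exactly the stated choice
\[
J_n\asymp|X|_nM_n\log\bigl(Cn|X|_nM_n(|X|_nM_n+\sup|\kappa|)/\Delta_n\bigr).
\]

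The main obstacle is this constant bookkeeping: the decay rate \(\log\rho\asymp\pi/(4BM_n)\), combined with degree \(2J_n-1\), must yield the exponent \(J_n/(2BM_n)\), or at least something matching the factor \(2|X|_nM_n\) after adjusting the universal constant \(C\). If the rate from Lemma~\ref{lem:analytic-log-density} is too small, I would instead enlarge the ellipse up to the strip limit \(|\Im z|<\pi/(2M_n)\) and control \(\log(1/\cos)\) there. A second point to verify is that the quadrature nodes lie in \([-B,B]\), which is standard: the nodes are zeros of the orthogonal polynomials of \(P_n\).
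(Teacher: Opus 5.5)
Your proposal is correct and follows the paper's proof: the same three-term split with the middle term bounded by $\varepsilon_n$, nodes of $P_{J_n}$ in $[-|X|_n,|X|_n]$, moment matching to cancel a Chebyshev projection, and Theorem~\ref{thm:Bernstein} applied with the bounds of Lemma~\ref{lem:analytic-log-density}, using $M_n$ in place of $M_g$ (you justify this substitution explicitly; the paper uses it without comment). The constant check you flag does go through, with extra room because you project onto degree $2J_n-1$ rather than the paper's $J_n$: for $a=\pi/(4BM_n)\le 1$ one has $\log\rho\ge\log(1+a)\ge a\log 2$, so $\rho^{-(2J_n-1)}\le\rho\exp\bigl(-J_n/(BM_n)\bigr)$, which beats the required $\exp\bigl(-J_n/(2BM_n)\bigr)$; no fallback is needed, and in any case a deficit in this exponent could not have been absorbed by enlarging $C$ inside the logarithm.
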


\begin{proof}
Theorem 3.3.1 of \citet{szeg1939orthogonal} implies that $\mathrm{supp}(P_{J_n})\subseteq\mathrm{conv}\big(\mathrm{supp}(P_n)\big)\subseteq[-|X|_n,|X|_n]$. 
We therefore have
$$
\begin{aligned}
\ell_n(f_{\hat{g}})-\ell_n(f_{\tilde{g}_{J_n}})&=n\int\big(\log l_{\hat g}(x)-\log l_{\tilde{g}_{J_n}}(x)\big)\dif P_n(x)\\
&\le n\int\log l_{\hat g}(x)\dif(P_n-P_{J_n})(x)+n\int \log l_{\tilde g_{J_n}}(x)\dif (P_{J_n}-P_n)(x)\\
&+n\int\big(\log l_{\hat g_{J_n}}(x)- \log l_{\tilde g_{J_n}}(x)\big)\dif P_{J_n}(x)\\
&\le2n\max_{g\in\{\hat{g},\tilde{g}_{J_n}\}}\Big|\int\log l_g(x)\dif(P_n-P_{J_n})(x)\Big|+\frac{\Delta_n}{2}\\
&\le4n\max_{g\in\{\hat{g},\tilde{g}_{J_n}\}}\max_{P\in\{P_n,P_{J_n}\}}\Big|\int\big(\log l_g(x)-\mathrm{Che}_{J_n}[\log l_g](x)\big)\dif P(x)\Big|+\frac{\Delta_n}{2}\\
&\le4n\max_{g\in\{\hat{g},\tilde{g}_{J_n}\}}\|\log l_g-\mathrm{Che}_{J_n}[\log l_g]\|_{\infty,B}+\frac{\Delta_n}{2}, 
\end{aligned}
$$
where $\mathrm{Che}_J[\cdot]$ and $\|\cdot\|_{\infty,B}$ are as in Theorem~\ref{thm:Bernstein} with $B=|X|_n$. Invoking the bound \eqref{eq:Bernstein} together with the coefficients from Lemma~\ref{lem:analytic-log-density} yields
$$
\begin{aligned}
\max_{g\in\{\hat{g},\tilde{g}_{J_n}\}}\|\log l_g-\mathrm{Che}_{J_n}[\log l_g]\|_{\infty,B}&\le\frac{8BM_n(CBM_n+\sup_{|\theta|\le M_n}|\kappa(\theta)|)}{\pi\big(1+\pi/(4BM_n)\big)^{J_n}}\\
&\le\frac{8BM_n(CBM_n+\sup_{|\theta|\le M_n}|\kappa(\theta)|)}{\pi 2^{(\pi J_n)/(4 BM_n)}}, 
\end{aligned}
$$
where we used $(1+1/x)^x\ge2$ for $x\ge1$. Thus, to have \eqref{eq:LRT} for $\tilde{g}=\tilde{g}_{J_n}$, we will need
$$
J_n\ge2BM_n\log\Big(\frac{64nBM_n(CBM_n+\sup_{|\theta|\le M_n}|\kappa(\theta)|)}{\pi\Delta_n}\Big). 
$$
A redefinition of the constant $C$ yields the desired result. 
\end{proof}

\bigskip

\begin{proof}[Proof of Corollary~\ref{cor:order}]
Under \ref{(S1)}, Lemma~\ref{lem:tail} gives
\[
\mathbb{P}(|X|_n \ge t) \le n \exp(-b_1 t), \qquad t \ge b_0,
\]
for some constants $b_0,b_1>0$. Choosing
\[
t=\frac{2}{b_1}\log n,
\]
we obtain
\[
\mathbb{P}\Big(|X|_n \ge \frac{2}{b_1}\log n\Big)\le \frac{1}{n}.
\]
Hence,
\[
|X|_n = O_{\mathbb{P}}(\log n).
\]
The asserted order of $J_n$ holds with $M_n=M$.

Under \ref{(LT)}, the same argument shows that
\[
|X|_n = O_{\mathbb{P}}\bigl((\log n)^{1/\beta_0}\bigr).
\]
If, in addition, \ref{(S2)} holds, then Lemma~\ref{lem:scenario2} implies that one may choose
\[
M_n = O(|X|_n^{\alpha_1})
    = O_{\mathbb{P}}\bigl((\log n)^{\alpha_1/\beta_0}\bigr).
\]
Consequently,
\[
\sup_{|\theta|\le M_n} |\kappa(\theta)|
    = O\bigl(M_n^{1+1/\alpha_2}\bigr)
    = O_{\mathbb{P}}\Big((\log n)^{\frac{\alpha_1}{\beta_0}\big(1+\frac{1}{\alpha_2}\big)}\Big).
\]
These yield the claimed order of $J_n$.
\end{proof}

\subsection{Proof of main results in Section~\ref{sec:marginal-density}}

We first recall the $\varepsilon$-covering number of a general set $\mathcal{S}$ with respect to a semimetric $\rho$:
\[
N(\varepsilon,\mathcal{S},\rho)
\coloneqq
\inf\Bigl\{
N:\exists\, s_1,\dots,s_N\in\mathcal{S}\ \text{such that}\ 
\mathcal{S}\subseteq\bigcup_{j=1}^N B_\rho(s_j,\varepsilon)
\Bigr\},
\]
where
\[
B_\rho(s,\varepsilon)\coloneqq\{s'\in\mathcal{S}:\rho(s,s')<\varepsilon\}.
\]

\begin{lemma}\label{lem:entropy0}
Recall the class $\cG$ in \eqref{eq:mixing} with $\Theta_0$ from \ref{(EF)}. Assume that $M<\infty$, and let $\{T_k\}_{k\in\mathbb{N}}$ denote the Chebyshev polynomials defined in \eqref{eq:Chebyshev} with $B=M$. For $g_1,g_2\in\mathcal{G}$, define
\[
\Delta_J(g_1,g_2)
\coloneqq
\max_{k\in[J]}
\left|
\int T_k(\theta)\,\mathrm{d}(g_1-g_2)(\theta)
\right|.
\]
Then, for every $\varepsilon\in(0,1/2)$,
\[
\log N(\varepsilon,\mathcal{G},\Delta_J)
\le
C J \log(1/\varepsilon),
\]
where $C>0$ is a universal constant.
\end{lemma}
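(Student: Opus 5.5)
The idea is to embed $\mathcal G$ into $\mathbb R^J$ through its Chebyshev moments and then cover the image cube. Define the map
\[
\Phi:\mathcal G\to\mathbb R^J,\qquad \Phi(g)\coloneqq\Bigl(\int T_k(\theta)\,\mathrm d g(\theta)\Bigr)_{k\in[J]}.
\]
Then $\Delta_J(g_1,g_2)=\|\Phi(g_1)-\Phi(g_2)\|_\infty$, so $\Delta_J$ is exactly the pullback of the sup-norm on $\mathbb R^J$. Since $\mathrm{supp}(g)\subseteq[-M,M]$ and $|T_k(\theta)|\le1$ on $[-M,M]$ by \eqref{eq:Chebyshev}, the image $\Phi(\mathcal G)$ lies in the cube $[-1,1]^J$. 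Everything therefore reduces to covering a subset of $[-1,1]^J$ in $\ell_\infty$.

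Next, cover the ambient cube by a grid. Take the points of $(\varepsilon/2)\mathbb Z^J\cap[-1-\varepsilon,1+\varepsilon]^J$, or more simply a grid of mesh $\varepsilon/2$ in each coordinate. This gives at most $(\lceil 4/\varepsilon\rceil+1)^J$ sup-norm balls of radius $\varepsilon/2$ that cover $[-1,1]^J$. The definition of $N(\varepsilon,\mathcal S,\rho)$ requires the centers to lie in $\mathcal S=\mathcal G$, so we use the standard halving trick. For each grid cell whose $\varepsilon/2$-ball meets $\Phi(\mathcal G)$, pick one $g_j$ with $\Phi(g_j)$ in that ball. By the triangle inequality, every $g$ whose image lies in that ball satisfies $\Delta_J(g,g_j)<\varepsilon$. (If the open/closed distinction matters, shrink the mesh slightly, e.g.\ to $\varepsilon/3$.) Hence
\[
N(\varepsilon,\mathcal G,\Delta_J)\le (C'/\varepsilon)^J,\qquad \log N(\varepsilon,\mathcal G,\Delta_J)\le J\log(C'/\varepsilon).
\]
For $\varepsilon<1/2$ we have $\log(1/\varepsilon)\ge\log 2$, so $\log C'\le(\log C'/\log 2)\log(1/\varepsilon)$. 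This absorbs the constant and gives $\log N(\varepsilon,\mathcal G,\Delta_J)\le CJ\log(1/\varepsilon)$ with $C$ universal.

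There is no real obstacle. The only points needing care are verifying that $|T_k|\le 1$ on the support, which holds because the scaling $B=M$ matches $\Theta_0=[-M,M]$, and handling the requirement that the centers belong to $\mathcal G$. The argument does not need Carathéodory-type reductions to discrete measures, since the crude volumetric cube bound already yields the $J\log(1/\varepsilon)$ rate.
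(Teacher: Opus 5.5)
Your proposal is correct and follows essentially the same route as the paper: embed $\mathcal{G}$ into $[-1,1]^J$ via the Chebyshev moments, identify $\Delta_J$ with the induced $\ell_\infty$ distance, and use the volumetric $(C/\varepsilon)^J$ cube covering. Your halving step is a small improvement in rigor. The paper simply writes $N(\varepsilon,\mathcal{G},\Delta_J)\le N(\varepsilon,[-1,1]^J,\|\cdot\|_\infty)$, which glosses over the requirement that the centers lie in $\mathcal{G}$; your step is what keeps those centers in $\mathcal{G}$.
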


\begin{proof}
Simply note that $|T_k(\theta)|\le 1$ for all $\theta\in[-M,M]$. Hence the map
\[
g\mapsto \Big(\int T_k(\theta)\,\mathrm{d}g(\theta)\Big)_{k\in[J]}
\]
embeds $\mathcal{G}$ into $[-1,1]^J$, and $\Delta_J$ is just the induced supremum distance $\|\cdot\|_\infty$. Therefore,
\[
N(\varepsilon,\mathcal{G},\Delta_J)\le N(\varepsilon,[-1,1]^J,\|\cdot\|_{\infty})\le \Big(\frac{C}{\varepsilon}\Big)^J,
\]
for some $C>0$, which gives the claim.
\end{proof}

\begin{lemma}\label{lem:quantity1}
Under \ref{(S1)}, there exists $\rho_0>1$ such that
\[
\sup_{x\in\mathbb{R}} \sup_{\theta\in E_{\rho_0}^{(M)}} |\bar{p}_\theta(x)| < \infty.
\]
\end{lemma}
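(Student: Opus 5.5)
The plan is to bound the modulus of the complexified kernel directly and then pick the ellipse thin enough to fit inside a strip where the cumulant generating function stays bounded. For complex $\theta$ we read $\bar p_\theta(x)=\exp(\theta x-\kappa(\theta)-(M+\tau)|x|)$, where $\kappa(\theta)=\log Z(\theta)$ and $Z(\theta)=\int e^{\theta x}p_0(x)\,\dif\mu(x)$ is extended to complex $\theta$. Then
\[
|\bar p_\theta(x)|=\frac{\exp\bigl(x\,\Re\theta-(M+\tau)|x|\bigr)}{|Z(\theta)|}.
\]
The numerator is at most $1$ for all $x$ whenever $|\Re\theta|\le M+\tau$. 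The task therefore reduces to two points. First, $Z$ must be analytic on a neighbourhood of the ellipse, so that $\kappa$ and $\bar p_\theta$ make sense there. Second, $|Z|$ must be bounded away from zero on the ellipse.

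For the first point, (S1) gives $[-M-\tau,M+\tau]\subseteq\Theta$. Hence $Z$ is finite and analytic on the open strip $\{|\Re\theta|<M+\tau\}$. This follows by dominated convergence and Morera's theorem, as used for $l_g$ before Lemma~\ref{lem:analytic-log-density}. We also have the bound $|Z(\theta)|\le Z(\Re\theta)$ there.

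The Bernstein ellipse $E^{(M)}_\rho$ has real semi-axis $\frac{M}{2}(\rho+\rho^{-1})$ and imaginary semi-axis $\frac{M}{2}(\rho-\rho^{-1})$. As $\rho\downarrow1$ these tend to $M$ and $0$ respectively. So for $\rho$ close to $1$ the closure of $E^{(M)}_\rho$ lies in the compact set
\[
K_\delta=\{\theta:|\Re\theta|\le M+\tau/2,\ |\Im\theta|\le\delta\}
\]
for any prescribed $\delta>0$. On $K_\delta$ we keep $|\Re\theta|\le M+\tau$, so the numerator bound applies.

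The main step is the lower bound on $|Z|$. Since $Z$ is continuous on the compact set $\{|\Re\theta|\le M+\tau/2,\ |\Im\theta|\le 1\}$, it is uniformly continuous there. Moreover $Z$ is real and strictly positive on the real segment $[-M-\tau/2,M+\tau/2]$, with minimum $m>0$. Choose $\delta\in(0,1]$ so that $|Z(\theta)-Z(\Re\theta)|<m/2$ whenever $|\Im\theta|\le\delta$. Then $|Z|\ge m/2$ on $K_\delta$.

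An alternative route to the same lower bound is more explicit. Write $\Re Z(a+ib)=\int e^{ax}\cos(bx)p_0\,\dif\mu$ and control $1-\cos(bx)\le |bx|$ using the exponential moments available at $a\pm\tau/2$. This also yields $\Re Z\ge m/2$ for small $|b|$, which additionally shows that the principal branch of $\log Z$ is defined on $K_\delta$.

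Putting the pieces together, fix $\rho_0>1$ with $\overline{E^{(M)}_{\rho_0}}\subseteq K_\delta$. Then
\[
\sup_{x\in\mathbb R}\ \sup_{\theta\in E^{(M)}_{\rho_0}}|\bar p_\theta(x)|\le \frac{2}{m}<\infty.
\]
We also have analyticity of $\theta\mapsto\bar p_\theta(x)$ on the ellipse, because $Z$ is analytic and nonvanishing there. This is presumably what is needed later for Lemma~\ref{lem:entropy} via Theorem~\ref{thm:Bernstein}.

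The only delicate point is the nonvanishing of $Z$ off the real axis. It is handled by the uniform continuity and compactness argument above, which is why $\rho_0$ must be taken close to $1$ rather than given explicitly.
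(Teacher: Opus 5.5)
Your proof is correct and follows essentially the same route as the paper: analyticity of $Z$ on a strip containing the thin ellipse (from (S1)), a continuity argument to keep $|Z|$ bounded away from zero once $\rho_0$ is close to $1$, and the bound $\exp\bigl(|\Re\theta|\,|x|-(M+\tau)|x|\bigr)\le 1$ on the numerator. The only difference is that you spell out the paper's ``by continuity'' step explicitly, via uniform continuity of $Z$ on a compact neighbourhood $K_\delta$ of the real segment, which yields the explicit bound $2/m$.
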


\begin{proof}
Under scenario~\ref{(S1)}, the moment generating function $Z(\theta)$ admits an analytic extension to the Bernstein ellipse $E_{\rho_0}^{(M)}$ for any
\[
\rho_0 \in \left(1,\, 1+\frac{\tau}{M}\right).
\]
Since $Z(\theta)>0$ for all $\theta\in[-M,M]$, by continuity we may choose $\rho_0>1$ sufficiently close to $1$ so that
\[
\inf_{\theta\in E_{\rho_0}^{(M)}} |Z(\theta)| > 0.
\]
Therefore,
\[
\sup_{x\in\mathbb{R}} \sup_{\theta\in E_{\rho_0}^{(M)}} |\bar{p}_\theta(x)|
\le
\sup_{x\in\mathbb{R}} \sup_{\theta\in E_{\rho_0}^{(M)}}
\frac{\exp\big( |\Re(\theta)|\,|x| - (M+\tau)|x| \big)}{|Z(\theta)|}
< \infty.
\]
This proves the claim.
\end{proof}

\begin{lemma}\label{lem:entropy}
Suppose that scenario~\ref{(S1)} holds. Then there exists a constant
$C_{p_0,M} > 0$, depending only on $p_0$ and $M$, such that for every
$\varepsilon \in (0,1/2)$,
\[
\log N(\varepsilon,\bar{\mathcal F},\|\cdot\|_\infty)
\le
C_{p_0,M}\,\bigl[\log(1/\varepsilon)\bigr]^2.
\]
Here $\|\cdot\|_\infty$ denotes the supremum norm on $\mathbb R$.
\end{lemma}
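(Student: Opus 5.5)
The plan is to approximate \(\bar p_\theta(x)\), viewed as a function of \(\theta\in[-M,M]\) for each fixed \(x\), uniformly in \(x\) by a Chebyshev expansion in \(\theta\). Then \(\bar f_g(x)\) becomes, up to a small error, a bounded linear combination of the generalized moments \(\int T_k\,\dif g\). The covering number of \(\bar{\mathcal F}\) then reduces to that of \(\mathcal G\) under \(\Delta_J\), which is exactly what Lemma~\ref{lem:entropy0} controls.

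\textbf{Step 1: Chebyshev expansion.} Fix \(\rho_0>1\) as in Lemma~\ref{lem:quantity1}, and set \(K\coloneqq\sup_{x}\sup_{\theta\in E_{\rho_0}^{(M)}}|\bar p_\theta(x)|<\infty\). For each \(x\), the map \(\theta\mapsto \bar p_\theta(x)=\exp(\theta x)\exp(-(M+\tau)|x|)/Z(\theta)\) is analytic on \(E_{\rho_0}^{(M)}\). This holds because \(Z\) extends analytically there and is bounded away from zero, which was the choice made in the proof of Lemma~\ref{lem:quantity1}. Theorem~\ref{thm:Bernstein} with \(B=M\) gives
\[
\bar p_\theta(x)=\sum_{k=0}^J c_k(x)T_k(\theta)+r_J(x,\theta),\qquad |c_k(x)|\le 2K\rho_0^{-k},\qquad \sup_{x,\,|\theta|\le M}|r_J(x,\theta)|\le \frac{2K}{(\rho_0-1)\rho_0^J}.
\]
The essential point is that these bounds are uniform in \(x\), since \(K\) does not depend on \(x\).

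\textbf{Step 2: Lipschitz reduction.} Integrate against \(g_1,g_2\in\mathcal G\). The \(k=0\) terms cancel because \(T_0\equiv1\) and both measures are probabilities. This leaves
\[
\|\bar f_{g_1}-\bar f_{g_2}\|_\infty\le \sum_{k=1}^J 2K\rho_0^{-k}\,\Delta_J(g_1,g_2)+\frac{4K}{(\rho_0-1)\rho_0^J}\le \frac{2K}{\rho_0-1}\Delta_J(g_1,g_2)+\frac{4K}{(\rho_0-1)\rho_0^{J}}.
\]
Choose \(J=\lceil \log(8K/((\rho_0-1)\varepsilon))/\log\rho_0\rceil\asymp_{p_0,M}\log(1/\varepsilon)\), so that the remainder term is at most \(\varepsilon/2\). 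Then any \(\varepsilon'\)-cover of \(\mathcal G\) under \(\Delta_J\), with \(\varepsilon'=(\rho_0-1)\varepsilon/(4K)\), maps to an \(\varepsilon\)-cover of \(\bar{\mathcal F}\) in sup norm. The map \(g\mapsto \bar f_g\) is surjective onto \(\bar{\mathcal F}\), and the centers of the cover are of the form \(\bar f_{g_j}\in\bar{\mathcal F}\), so the cover lies in the class as the covering-number definition requires.

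\textbf{Step 3: counting.} Lemma~\ref{lem:entropy0} then gives
\[
\log N(\varepsilon,\bar{\mathcal F},\|\cdot\|_\infty)\le CJ\log(1/\varepsilon')\le C_{p_0,M}[\log(1/\varepsilon)]^2.
\]
Constants that arise when \(\varepsilon'\ge 1/2\) are handled by enlarging \(C_{p_0,M}\). Since \(\varepsilon<1/2\), we have \(\log(1/\varepsilon)\ge\log 2\), so additive constants are absorbed.

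\textbf{Main obstacle.} The only delicate point is Step 1, namely securing analyticity and a bound uniform in \(x\). This is exactly what the damping factor \(\exp(-(M+\tau)|x|)\) and the choice of \(\rho_0<1+\tau/M\) in Lemma~\ref{lem:quantity1} provide: on the ellipse, \(|\Re\theta|\le M\frac{\rho_0+\rho_0^{-1}}{2}<M+\tau\) keeps \(\exp(|\Re\theta||x|-(M+\tau)|x|)\le1\). Analyticity of \(\theta\mapsto\bar p_\theta(x)\) follows because the numerator is entire and \(1/Z\) is analytic where \(Z\neq0\). Once these uniform bounds are in hand, the rest is bookkeeping.
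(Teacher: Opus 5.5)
Your proposal is correct and follows essentially the same route as the paper: a Chebyshev expansion of \(\theta\mapsto\bar p_\theta(x)\) on \(E_{\rho_0}^{(M)}\) with coefficient and remainder bounds uniform in \(x\) (via Lemma~\ref{lem:quantity1}), cancellation of the \(k=0\) term, a Lipschitz bound in \(\Delta_J\) plus a geometric remainder, the choice \(J\asymp\log(1/\varepsilon)\), and then Lemma~\ref{lem:entropy0}. Your extra remarks on the cover centers lying in \(\bar{\mathcal F}\) and on the case \(\varepsilon'\ge 1/2\) only make explicit details that the paper leaves implicit.
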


\begin{proof}
Fix an arbitrary $x\in\mathbb{R}$. Apply Theorem~\ref{thm:Bernstein} with $h(\cdot)=\bar{p}_{\cdot}(x)$, $B=M$, and $\rho=\rho_0$, where $\rho_0$ is given by Lemma~\ref{lem:quantity1}. Then, for any $g_1,g_2\in\mathcal{G}$ and any $J\in\mathbb{N}$,
\[
\begin{aligned}
|\bar{f}_{g_1}(x)-\bar{f}_{g_2}(x)|
&=\Big|\int \bar{p}_{\theta}(x)\,\mathrm{d}(g_1-g_2)(\theta)\Big|\\
&\le \Big|\int \mathrm{Che}_J[h](\theta)\,\mathrm{d}(g_1-g_2)(\theta)\Big|
   + \frac{4C_h}{(\rho_0-1)\rho_0^J}\\
&\le \sum_{k=1}^J \frac{2C_h}{\rho_0^k}
   \Big|\int T_k(\theta)\,\mathrm{d}(g_1-g_2)(\theta)\Big|
   + \frac{4C_h}{(\rho_0-1)\rho_0^J}\\
&\le \frac{2C_h}{\rho_0-1}
\Big(\Delta_J(g_1,g_2)+\frac{2}{\rho_0^J}\Big),
\end{aligned}
\]
where $\Delta_J$ is defined as in Lemma~\ref{lem:entropy0}. Therefore,
\[
\|\bar{f}_{g_1}-\bar{f}_{g_2}\|_\infty
\le
\frac{2C_{p_0,M}}{\rho_0-1}
\Big(\Delta_J(g_1,g_2)+\frac{2}{\rho_0^J}\Big),
\]
where
\[
C_{p_0,M}\coloneqq \sup_{x\in\mathbb{R}}\sup_{\theta\in E_{\rho_0}^{(M)}} |\bar{p}_\theta(x)|.
\]

For any $\varepsilon\in(0,1/2)$, to make the second term bounded by $\varepsilon/2$, we choose
\[
J=\Big\lceil\frac{\log(1/\varepsilon)+\log(8C_{p_0,M})-\log(\rho_0-1)}{\log\rho_0}\Big\rceil.
\]
Then, by Lemma~\ref{lem:entropy0},
\[
\log N(\varepsilon,\bar{\mathcal{F}},\|\cdot\|_\infty)
\le
C J \log\Big(\frac{4C_{p_0,M}}{(\rho_0-1)\varepsilon}\Big)
\le
C_{p_0,M}\big[\log(1/\varepsilon)\big]^2,
\]
where $C_{p_0,M}$ is redefined in the last step. This completes the proof.
\end{proof}

\bigskip

\begin{proof}[Proof of Theorem~\ref{thm:density-rate1}]
Fix $n\in\bN$ and $\varepsilon,\delta>0$. Let $\{\bar{f}_k\}_{k\in[N]}$ be an $\varepsilon$-covering of $\bar{\cF}$ given by Lemma~\ref{lem:entropy}. Define $\mathcal{N}\subseteq [N]$ to be the set of indices $k$ for which there exists some $\bar{f}_{k,0}\in\bar{\cF}$ such that
\[
\|\bar{f}_{k,0}-\bar{f}_k\|_{\infty}\le\varepsilon,
\qquad
H^2(f_{k,0},f_{g_0})\ge\delta,
\]
where $f_{k,0}$ denotes the marginal density corresponding to $\bar{f}_{k,0}$.

If
\[
H^2(f_{\tilde{g}},f_{g_0})\ge\delta,
\]
then, $\mathcal{N}\neq\emptyset$, and
\[
\exp\big(-A(\log n)^2\big)
\le
\prod_{i=1}^n\frac{f_{\tilde{g}}(X_i)}{f_{g_0}(X_i)}
=
\prod_{i=1}^n\frac{\bar{f}_{\tilde{g}}(X_i)}{\bar{f}_{g_0}(X_i)}
\le
\sup_{k\in\mathcal{N}}\prod_{i=1}^n\frac{\bar{f}_{k,0}(X_i)+2\varepsilon}{\bar{f}_{g_0}(X_i)}.
\]
Consequently,
\[
\begin{aligned}
\bP\big(H^2(f_{\tilde{g}},f_{g_0})\ge\delta\big)
&\le \bP\Big(\exp\big(-A(\log n)^2\big)\le \sup_{k\in\mathcal{N}}\prod_{i=1}^n\frac{\bar{f}_{k,0}(X_i)+2\varepsilon}{\bar{f}_{g_0}(X_i)}\Big)\\
&\le N\exp\Big(\frac{A}{2}(\log n)^2\Big)\sup_{k\in\mathcal{N}}\prod_{i=1}^n\bE\sqrt{\frac{\bar{f}_{k,0}(X_i)+2\varepsilon}{\bar{f}_{g_0}(X_i)}}\\
&\le N\exp\Big(\frac{A}{2}(\log n)^2\Big)\sup_{k\in\mathcal{N}}\exp\bigg[n\int\Big(\sqrt{\frac{\bar{f}_{k,0}+2\varepsilon}{\bar{f}_{g_0}}}-1\Big)f_{g_0}\dif \mu\bigg]\\
&\le \exp\Big(C_{p_0,M}\big[\log(1/\varepsilon)\big]^2+\frac{A}{2}(\log n)^2-\frac{n\delta}{2}+n\sqrt{2\varepsilon}\,I_0\Big),
\end{aligned}
\]
where in the third inequality we used $x\le \exp(x-1)$. To justify the last inequality, note that
\[
\begin{aligned}
\int\Big(\sqrt{\frac{\bar{f}_{k,0}+2\varepsilon}{\bar{f}_{g_0}}}-1\Big)f_{g_0}\dif \mu
&\le
\int\Big(\sqrt{\frac{f_{k,0}}{f_{g_0}}}-1\Big)f_{g_0}\dif \mu
+\sqrt{2\varepsilon}\int\sqrt{\frac{1}{\bar{f}_{g_0}}}f_{g_0}\dif \mu\\
&\le
-\frac{H^2(f_{k,0},f_{g_0})}{2}+\sqrt{2\varepsilon}\,I_0,
\end{aligned}
\]
where
\[
I_0\coloneqq \int \exp\Big(\big(M+\frac{\tau}{2}\big)|x|-\frac{1}{2}\inf_{|\theta|\le M}\kappa(\theta)\Big)p_0(x)\dif \mu(x)<\infty.
\]

Finally, take $\varepsilon=1/n^2$ and
\[
\delta=tC_{p_0,M,A}\frac{(\log n)^2}{n},
\]
with $C_{p_0,M,A}>0$ sufficiently large. This completes the proof.
\end{proof}

Theorem~\ref{thm:Bernstein} directly yields an entropy bound for a class of analytic functions. An analogous bound for the multivariate case is given in Theorem 2.7.16 of \citet{van1996weak}. 

\begin{lemma}\label{lem:entropy-analytic}
Fix $B>0$ and $\rho>1$. Let $\mathcal{H}$ be a class of functions
$h:[-B,B]\to \mathbb{R}$, each of which admits an analytic continuation to
the Bernstein ellipse $E_\rho^{(B)}$. Suppose that
\[
C_{\mathcal{H}}
\coloneqq
\sup_{h\in\mathcal{H}}\sup_{z\in E_\rho^{(B)}} |h(z)| < \infty .
\]
Then, for every $\varepsilon\in(0,1/2)$,
\[
\log N(\varepsilon,\mathcal{H},\|\cdot\|_{\infty,B})
\le
C_\rho\bigl[\log(1/\varepsilon)+\log C_{\mathcal{H}}\bigr]^2,
\]
where $C_\rho$ is a constant depending only on $\rho$.
\end{lemma}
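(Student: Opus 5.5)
The plan is to follow the template of Lemma~\ref{lem:entropy0} and Lemma~\ref{lem:entropy}: approximate each $h\in\mathcal H$ by its Chebyshev projection and then cover the finite-dimensional vector of Chebyshev coefficients. Fix $\varepsilon\in(0,1/2)$ and write $C_{\mathcal H}$ for the uniform bound on $E_\rho^{(B)}$. If $C_{\mathcal H}\le \varepsilon$, a single function covers $\mathcal H$ and there is nothing to prove. So we may assume $C_{\mathcal H}>\varepsilon$, in which case $\log(1/\varepsilon)+\log C_{\mathcal H}>0$.

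\textbf{Truncation.} By Theorem~\ref{thm:Bernstein}, every $h\in\mathcal H$ satisfies
\[
\|h-\mathrm{Che}_J[h]\|_{\infty,B}\le \frac{2C_{\mathcal H}}{(\rho-1)\rho^J},
\]
and its Chebyshev coefficients obey $|c_k(h)|\le 2C_{\mathcal H}\rho^{-k}$. Choose
\[
J=\Big\lceil \frac{\log(8C_{\mathcal H}/((\rho-1)\varepsilon))}{\log\rho}\Big\rceil,
\]
so that the truncation error is at most $\varepsilon/4$. Then $J\le C_\rho\bigl[\log(1/\varepsilon)+\log C_{\mathcal H}+1\bigr]$.

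\textbf{Covering the coefficients.} The map $h\mapsto (c_0(h),\dots,c_J(h))$ sends $\mathcal H$ into a box $\prod_{k=0}^J[-2C_{\mathcal H},2C_{\mathcal H}]$. Since $\|T_k\|_{\infty,B}\le 1$, two functions whose coefficient vectors are within $\eta$ in sup-norm satisfy
\[
\|\mathrm{Che}_J[h_1]-\mathrm{Che}_J[h_2]\|_{\infty,B}\le (J+1)\eta.
\]
Take $\eta=\varepsilon/(4(J+1))$ and a grid of mesh $\eta$ in the box. Each $h$ then has $\|h-h'\|_{\infty,B}\le \varepsilon/4+\varepsilon/4+\varepsilon/4<\varepsilon$ for a suitable $h'\in\mathcal H$; the proper covering is obtained by picking one element of $\mathcal H$ per nonempty grid cell, at the cost of a factor $2$ in the radius, which is absorbed by replacing $\varepsilon$ with $\varepsilon/2$ throughout. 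The log of the number of grid cells is at most
\[
(J+1)\log\Bigl(1+\frac{4C_{\mathcal H}}{\eta}\Bigr)=(J+1)\log\Bigl(1+\frac{16C_{\mathcal H}(J+1)}{\varepsilon}\Bigr).
\]

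\textbf{Combining.} Since $\log(J+1)\lesssim \log\log$ of the main term, this is bounded by $C_\rho\,[\log(1/\varepsilon)+\log C_{\mathcal H}+1]^2$. The constant $+1$ is absorbed because $\log(1/\varepsilon)\ge\log 2$. One subtlety is the case $C_{\mathcal H}<1$, where $\log C_{\mathcal H}$ is negative. This is harmless, because under the reduction $C_{\mathcal H}>\varepsilon$ we have $\log(1/\varepsilon)+\log C_{\mathcal H}=\log(C_{\mathcal H}/\varepsilon)>0$, and everything is expressed through $\log(C_{\mathcal H}/\varepsilon)$, with $1\lesssim\log(1/\varepsilon)$ used where needed.

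The main obstacle is purely bookkeeping: making the additive constants and the $\log(J+1)$ factor fit inside the stated squared form with a constant depending only on $\rho$. The cleanest route is to express everything in terms of $L\coloneqq\log(C_{\mathcal H}/\varepsilon)+\log 2$ and check that $J\lesssim_\rho L$ and that $\log(C_{\mathcal H}(J+1)/\varepsilon)\lesssim_\rho L$. I expect the latter part of this check, for small $L$, to require the most care.
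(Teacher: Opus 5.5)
Your main line is the paper's: truncate at $J\approx\log(C_{\mathcal H}/\varepsilon)/\log\rho$ via Theorem~\ref{thm:Bernstein} and cover the vector of Chebyshev coefficients. The only difference is cosmetic. The paper rescales $c_k$ by $\rho^k/(2C_{\mathcal H})$ and uses $\sum_k\rho^{-k}\le\rho/(\rho-1)$, so its mesh is of order $(\rho-1)\varepsilon/(\rho C_{\mathcal H})$ independently of $J$. Your unweighted mesh $\varepsilon/(4(J+1))$ costs an extra $(J+1)\log(J+1)$, which is harmless. (You also need no factor-$2$ loss when choosing representatives: two coefficient vectors in one cell are already within $3\varepsilon/4$.)

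What fails is your treatment of small $C_{\mathcal H}/\varepsilon$. If $C_{\mathcal H}\le\varepsilon$, a single element of $\mathcal H$ need not cover, since two members can be at distance $2C_{\mathcal H}$; you would need $2C_{\mathcal H}<\varepsilon$. If $C_{\mathcal H}>\varepsilon$, the additive constant cannot be absorbed using $\log(1/\varepsilon)\ge\log 2$. What must be bounded below is the whole bracket $\log(C_{\mathcal H}/\varepsilon)$, and positivity is not enough. Your $L$-bookkeeping does correctly give $\log N\le C_\rho L^2$. The failure is the passage from $L^2$ to $[\log(1/\varepsilon)+\log C_{\mathcal H}]^2$, not the step you flagged.

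Neither point can be repaired, because the statement is false there. Take $\mathcal H=\{c,-c\}$ (constant functions) with $c$ close to $\varepsilon$. Then $N(\varepsilon,\mathcal H,\|\cdot\|_{\infty,B})=2$, while $C_\rho[\log(c/\varepsilon)]^2\to 0$, and it equals $0$ at $c=\varepsilon$. The paper's proof silently uses the same restriction in its final inequality $CJ\log\bigl(4C_{\mathcal H}\rho/((\rho-1)\varepsilon)\bigr)\le C_\rho[\log(1/\varepsilon)+\log C_{\mathcal H}]^2$.

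The right fix is to assume $C_{\mathcal H}\ge 1$, or to replace $C_{\mathcal H}$ by $C_{\mathcal H}\vee 1$ on the right-hand side. This holds in the only application, Lemma~\ref{lem:entropy2}, since $\delta_0\in\mathcal G$ gives $l_{\delta_0}\equiv 1$. Then $\log(C_{\mathcal H}/\varepsilon)\ge\log 2$, so $L\le 2\log(C_{\mathcal H}/\varepsilon)$. Your argument then closes once the two edge-case claims are dropped.
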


\begin{proof}
Let $h_1,h_2\in\mathcal{H}$ and $J\in\mathbb{N}$. By
Theorem~\ref{thm:Bernstein},
\[
\begin{aligned}
\|h_1-h_2\|_{\infty,B}
&\le
\|\mathrm{Che}_J[h_1]-\mathrm{Che}_J[h_2]\|_{\infty,B}
+
2\max_{h\in\mathcal{H}}\|h-\mathrm{Che}_J[h]\|_{\infty,B} \\
&\le
\sum_{k=0}^J |c_{k,1}-c_{k,2}|
+
\frac{4C_{\mathcal{H}}}{(\rho-1)\rho^J} \\
&\le
\frac{2C_{\mathcal{H}}\rho}{\rho-1}
\max_{0\le k\le J}
\Big(
\frac{\rho^k}{2C_{\mathcal{H}}}|c_{k,1}-c_{k,2}|
\Big)
+
\frac{4C_{\mathcal{H}}}{(\rho-1)\rho^J},
\end{aligned}
\]
where $c_{k,i}$ denotes the $k$th coefficient of
$\mathrm{Che}_J[h_i]$, $i=1,2$.

Choose
\[
J
=
\left\lceil
\frac{
\log(1/\varepsilon)+\log(8C_{\mathcal{H}})-\log(\rho-1)
}{
\log \rho
}
\right\rceil ,
\]
so that the approximation error term is bounded by $\varepsilon/2$. An
argument analogous to the proof of Lemma~\ref{lem:entropy} then gives
\[
\log N(\varepsilon,\mathcal{H},\|\cdot\|_{\infty,B})
\le
CJ\log\Big(
\frac{4C_{\mathcal{H}}\rho}{(\rho-1)\varepsilon}
\Big)
\le
C_\rho\bigl[\log(1/\varepsilon)+\log C_{\mathcal{H}}\bigr]^2 .
\]
This proves the claim.
\end{proof}

\begin{lemma}\label{lem:quantity2}
Assume \ref{(S2)}. Fix arbitrary $g\in\mathcal{G}$, $B\ge 1$, and
$\rho>1$. Then
\[
\sup_{z\in E_{\rho}^{(B)}} |l_g(z)|
\le
\exp\bigl(C_{p_0,\rho} B^{1+\alpha_1}\bigr),
\]
where $C_{p_0,\rho}$ is a constant depending only on $p_0$ and $\rho$.
\end{lemma}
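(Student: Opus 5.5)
We bound $|l_g(z)|$ pointwise for $z=x+iy\in E_\rho^{(B)}$ and optimize over $\theta\in\mathbb R$. Write
\[
|l_g(z)|\le \int \bigl|\exp(\theta z-\kappa(\theta))\bigr|\,\mathrm{d}g(\theta)=\int \exp\bigl(\theta x-\kappa(\theta)\bigr)\,\mathrm{d}g(\theta)\le \sup_{\theta\in\mathbb R}\exp\bigl(\theta x-\kappa(\theta)\bigr),
\]
since $|e^{i\theta y}|=1$. This holds uniformly in $g\in\mathcal G$, with no support restriction needed. So it suffices to bound the Legendre transform $\kappa^*(x)=\sup_\theta\{\theta x-\kappa(\theta)\}$ for $|x|\le |\Re z|\le B(\rho+\rho^{-1})/2\eqqcolon c_\rho B$. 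This also shows that $l_g$ is entire: the integrand is entire in $z$ and dominated locally uniformly by $\exp(\kappa^*(\Re z))$, so Morera's theorem applies as noted before Lemma~\ref{lem:analytic-log-density}.

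To bound $\kappa^*$, use the lower bound in \eqref{eq:CGF}. For $|\theta|\ge c_0$ we have $\theta x-\kappa(\theta)\le |\theta||x|-c_1|\theta|^{1+1/\alpha_1}$. Maximizing $s|x|-c_1 s^{1+1/\alpha_1}$ over $s\ge0$ gives the value $C_{c_1,\alpha_1}|x|^{1+\alpha_1}$, attained at $s\asymp |x|^{\alpha_1}$. For $|\theta|<c_0$ we have $\theta x-\kappa(\theta)\le c_0|x|-\inf_{|\theta|\le c_0}\kappa(\theta)$. Since $p_0$ has mean zero, $\kappa\ge0$ by Jensen ($\kappa(\theta)\ge\theta\,\mathbb E X=0$), so this contribution is at most $c_0|x|$. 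Combining the two cases,
\[
\kappa^*(x)\le C_{p_0}\bigl(|x|^{1+\alpha_1}+|x|\bigr).
\]

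Plugging in $|x|\le c_\rho B$ with $B\ge1$ absorbs the linear term into $B^{1+\alpha_1}$. This yields $\sup_{z}|l_g(z)|\le \exp(C_{p_0,\rho}B^{1+\alpha_1})$, where $C_{p_0,\rho}$ depends on $\rho$ only through $c_\rho$.

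The only point requiring care is the small-$|\theta|$ regime, where \eqref{eq:CGF} gives no information. There, the mean-zero assumption (or simply continuity of $\kappa$ on the compact interval $[-c_0,c_0]\subseteq\Theta=\mathbb R$) supplies the needed lower bound on $\kappa$. Everything else is a one-variable optimization, so I do not expect a genuine obstacle.
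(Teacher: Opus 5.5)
Your proof is correct and follows essentially the same route as the paper. Like the paper, you bound $|l_g(z)|$ by $\sup_{\theta}\exp\bigl(|\Re z|\,|\theta|-\kappa(\theta)\bigr)$ and split at $|\theta|=c_0$, using $\kappa\ge 0$ for small $|\theta|$ and the lower bound in \eqref{eq:CGF} for large $|\theta|$, then absorb everything into $B^{1+\alpha_1}$ via $|\Re z|\le \tfrac{\rho+\rho^{-1}}{2}B$ and $B\ge 1$. Your explicit Legendre-transform computation, the mean-zero/Jensen justification of $\kappa\ge0$, and the analyticity of $l_g$ for non-compactly supported $g$ are details the paper leaves implicit.
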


\begin{proof}
Under scenario~\ref{(S2)}, for any $z\in E_{\rho}^{(B)}$,
\[
\begin{aligned}
|l_g(z)|
&\le
\int \exp\bigl(|\Re z|\,|\theta|-\kappa(\theta)\bigr)\, \dif g(\theta) \\
&\le
\sup_{\theta\in\mathbb{R}}
\exp\bigl(|\Re z|\,|\theta|-\kappa(\theta)\bigr) \\
&\le
\exp(|\Re z|c_0)
+
\sup_{|\theta|\ge c_0}
\exp\bigl(|\Re z|\,|\theta|-\kappa(\theta)\bigr).
\end{aligned}
\]
Since $z\in E_{\rho}^{(B)}$, we have $|\Re z|\le \rho B$. Therefore,
using \ref{(S2)} and $B\ge 1$,
\[
|l_g(z)|
\le
\exp(c_0\rho B)
+
\exp\bigl(C_{p_0,\rho} B^{1+\alpha_1}\bigr)
\le
\exp\bigl(C_{p_0,\rho} B^{1+\alpha_1}\bigr),
\]
after enlarging $C_{p_0,\rho}$ if necessary. Taking the supremum over
$z\in E_{\rho}^{(B)}$ proves the claim.
\end{proof}

\begin{lemma}\label{lem:entropy2}
Suppose that scenario~\ref{(S2)} holds, and let $B \ge 1$. Then there
exists a constant $C_{p_0}>0$, depending only on $p_0$, such that, for
every $\varepsilon\in(0,1/2)$,
\[
\log N(\varepsilon,\mathcal L,\|\cdot\|_{\infty,B})
\le
C_{p_0}\bigl[\log(1/\varepsilon)+B^{1+\alpha_1}\bigr]^2 .
\]
\end{lemma}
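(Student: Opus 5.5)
The plan is to combine Lemma~\ref{lem:quantity2} with Lemma~\ref{lem:entropy-analytic}, applied to the class $\mathcal{H}=\mathcal L$ restricted to $[-B,B]$.

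First I check that every $l_g$ admits an analytic continuation to a fixed Bernstein ellipse. The natural choice is $\rho=2$. Note that $g\in\cG$ need not have compact support when $M=\infty$, so Morera's theorem must be applied directly to
\[
l_g(z)=\int \exp\bigl(\theta z-\kappa(\theta)\bigr)\,\dif g(\theta).
\]
On any compact set of $z$ with $|\Re z|\le R$, the integrand is bounded by $\sup_\theta\exp\bigl(R|\theta|-\kappa(\theta)\bigr)$. This quantity is finite because (S2) gives $\kappa(\theta)\ge c_1|\theta|^{1+1/\alpha_1}$ for $|\theta|\ge c_0$. Hence the integral converges locally uniformly, and $l_g$ is entire. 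In particular it is analytic on $E_2^{(B)}$.

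Next, Lemma~\ref{lem:quantity2} with $\rho=2$ gives the uniform bound
\[
C_{\mathcal L}\coloneqq\sup_{g\in\cG}\sup_{z\in E_2^{(B)}}|l_g(z)|\le \exp\bigl(C_{p_0,2}B^{1+\alpha_1}\bigr),
\]
since the constant there does not depend on $g$. Each $l_g$ is real-valued on $[-B,B]$, so all hypotheses of Lemma~\ref{lem:entropy-analytic} hold. It yields
\[
\log N(\varepsilon,\mathcal L,\|\cdot\|_{\infty,B})\le C_2\bigl[\log(1/\varepsilon)+C_{p_0,2}B^{1+\alpha_1}\bigr]^2 .
\]
Absorbing constants via $(a+cb)^2\le \max(1,c)^2(a+b)^2$ gives the stated bound with $C_{p_0}\coloneqq C_2\max(1,C_{p_0,2})^2$.

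One minor point needs care: Lemma~\ref{lem:entropy-analytic} implicitly uses $\log C_{\mathcal H}\ge0$, or at least $C_{\mathcal H}\ge1$. This is harmless here because the upper bound $\exp(C_{p_0,2}B^{1+\alpha_1})$ is at least $1$, and we may use it in place of the exact supremum.

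The only real obstacle is the analyticity and uniform-boundedness step for $g$ with unbounded support. Lemma~\ref{lem:quantity2} already handles the bound, so what remains is the routine justification of holomorphy via dominated convergence and Morera's theorem.
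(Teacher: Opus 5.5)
Your proposal is correct and matches the paper's proof: apply Lemma~\ref{lem:entropy-analytic} to $\mathcal H=\mathcal L$ with $\rho=2$, bound $C_{\mathcal L}$ by $\exp\bigl(C_{p_0,2}B^{1+\alpha_1}\bigr)$ via Lemma~\ref{lem:quantity2}, and absorb constants. Two of your additions fill in details the paper leaves implicit: the Morera/dominated-convergence argument that $l_g$ is entire even when $g$ has unbounded support, and the remark that an upper bound at least $1$ may replace $C_{\mathcal L}$.
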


\begin{proof}
Apply Lemma~\ref{lem:entropy-analytic} with $\mathcal{H}=\mathcal{L}$ and
$\rho=2$. This gives
\[
\log N(\varepsilon,\mathcal L,\|\cdot\|_{\infty,B})
\le
C_2\bigl[\log(1/\varepsilon)+\log C_{\mathcal L}\bigr]^2 .
\]
By Lemma~\ref{lem:quantity2},
\[
C_{\mathcal L}
=
\sup_{l\in\mathcal L}\sup_{z\in E_2^{(B)}} |l(z)|
\le
\exp\bigl(C_{p_0,2}B^{1+\alpha_1}\bigr).
\]
Therefore,
\[
\log N(\varepsilon,\mathcal L,\|\cdot\|_{\infty,B})
\le
C_2\bigl[\log(1/\varepsilon)+C_{p_0,2}B^{1+\alpha_1}\bigr]^2.
\]
This completes the proof.
\end{proof}

\bigskip

\begin{proof}[Proof of Theorem~\ref{thm:density-rate2}]
Fix $n\in\mathbb{N}$, $\varepsilon,\delta>0$, and $B\ge 1$. Let $\{l_k\}_{k\in[N]}$ be an $\varepsilon$-covering of $\mathcal{L}$ from Lemma~\ref{lem:entropy2}. Define $\mathcal{N}\subseteq [N]$ to be the set of indices $k$ for which there exists some $l_{k,0}\in\mathcal{L}$ such that
\[
\|l_{k,0}-l_k\|_{\infty,B}\le \varepsilon,
\qquad
H^2(f_{k,0},f_{g_0})\ge \delta,
\]
where $f_{k,0}$ denotes the marginal density corresponding to $l_{k,0}$.

If
\[
H^2(f_{\tilde{g}},f_{g_0})\ge \delta,
\qquad\text{and}\qquad
|X|_n\le B,
\]
then $\mathcal{N}\neq\emptyset$, and
\[
\exp\big(-A(\log n)^{\gamma_0}\big)
\le
\prod_{i=1}^n\frac{f_{\tilde{g}}(X_i)}{f_{g_0}(X_i)}
=
\prod_{i=1}^n\frac{l_{\tilde{g}}(X_i)}{l_{g_0}(X_i)}
\le
\sup_{k\in\mathcal{N}}\prod_{i=1}^n\frac{l_{k,0}(X_i)+2\varepsilon}{l_{g_0}(X_i)}.
\]
Consequently, using the same argument as in the proof of Theorem~\ref{thm:density-rate1},
\[
\begin{aligned}
&\bP\big(H^2(f_{\tilde{g}},f_{g_0})\ge\delta\big)
-\bP\big(|X|_n\ge B\big)\\
&\le
\bP\Big(\exp\big(-A(\log n)^{\gamma_0}\big)\le \sup_{k\in\mathcal{N}}\prod_{i=1}^n\frac{l_{k,0}(X_i)+2\varepsilon}{l_{g_0}(X_i)}\Big)\\
&\le
\exp\Big(\log N+\frac{A}{2}(\log n)^{\gamma_0}-\frac{n\delta}{2}+n\sqrt{2\varepsilon}\Big).
\end{aligned}
\]

By~\ref{(LT)}, for all sufficiently large $n$,
\[
\bP\big(|X|_n\ge B\big)
\le
\exp\big(-t^{\beta_0/[2(1+\alpha_1)]}\log n\big),
\]
provided that we choose
\[
B=t^{1/[2(1+\alpha_1)]}\Big(\frac{2}{b_1}\log n\Big)^{1/\beta_0}.
\]

With $\varepsilon=1/n^{2}$, it follows that
\[
\log N
\le
C_{p_0}\big[2\log n+B^{1+\alpha_1}\big]^2
\le
C_{p_0,g_0}\, t(\log n)^{\gamma_0},
\]
for some sufficiently large constant $C_{p_0,g_0}>0$. We choose
\[
\delta=tC_{p_0,g_0,A}\frac{(\log n)^{\gamma_0}}{n}.
\]
This completes the proof.
\end{proof}

\subsection{Proof of Theorem~\ref{thm:approximateNPMLE-hetero}}

Recalling the notation from Section~\ref{sec:heter}, we establish the following lemma.

\begin{lemma}\label{lem:analytic-log-density2}
Fix $g\in\cG$ and assume that
\[
M_g\coloneqq \sup_{\theta\in\mathrm{supp}(g)}|\theta|<\infty.
\]
For any $B_1,B_2>0$ and $\tau_0>1$, define
\[
\rho_1\coloneqq \delta_1+\sqrt{1+\delta_1^2},
\qquad
\delta_1\coloneqq \frac{\pi}{8B_1(\tau_0+2)B_2M_g}\land\frac{1}{2},
\]
and
\[
\rho_2\coloneqq \delta_2+\sqrt{1+\delta_2^2},
\qquad
\delta_2\coloneqq \frac{\pi}{4(4B_1+M_g)B_2M_g}\land\frac{1}{2}\land\sqrt{\Big(\frac{\tau_0+1}{2}\Big)^2-1}.
\]
Then $\log f_g(z|\tau)$ admits an analytic extension to the translated Bernstein polyellipse
\[
\mathcal{E}(B_1,B_2,\tau_0)\coloneqq E_{\rho_1}^{(B_1)}\times \Big[E_{\rho_2}^{(B_2)}+B_2\tau_0\Big].
\]
Moreover,
\[
\sup_{(z,\tau)\in\mathcal{E}(B_1,B_2,\tau_0)}|\log f_g(z|\tau)|
\le
C_{B_2,\tau_0}(B_1^2+M_g^2),
\]
for some constant $C_{B_2,\tau_0}>0$ depending only on $B_2$ and $\tau_0$, provided that $B_1M_g\ge 1$.
\end{lemma}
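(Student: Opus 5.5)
We follow the proof of Lemma~\ref{lem:analytic-log-density}: write the mixture density in complex form, show that its real part stays positive on the domain, and then bound the modulus of its logarithm. For complex $(z,\tau)$ we set
\[
f_g(z|\tau)=\sqrt{\tfrac{\tau}{2\pi}}\int \exp\Big(-\tfrac{\tau(z-\theta)^2}{2}\Big)\,\dif g(\theta).
\]
We use the principal branch of $\sqrt{\tau}$. It is analytic wherever $\Re\tau>0$, and this holds on $E_{\rho_2}^{(B_2)}+B_2\tau_0$: every point there has real part at least $B_2\tau_0-B_2\sqrt{1+\delta_2^2}$. The constraint $\delta_2\le\sqrt{((\tau_0+1)/2)^2-1}$ forces $\sqrt{1+\delta_2^2}\le(\tau_0+1)/2$, so the real part is at least $B_2(\tau_0-1)/2>0$. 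The integral is entire in $(z,\tau)$ because $g$ has compact support. So analyticity of $\log f_g$ reduces to showing $\Re\bigl[\int \exp(-\tau(z-\theta)^2/2)\,\dif g\bigr]>0$. We take the log-branches of $\sqrt{\tau}$ and of the integral separately; together they give an analytic logarithm of $f_g$.

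To check the positivity, expand the exponent $-\tfrac{\tau}{2}(z-\theta)^2=-\tfrac{\tau z^2}{2}+\tau z\theta-\tfrac{\tau\theta^2}{2}$ and factor out $e^{-\tau z^2/2}$, which does not vanish. The remaining integrand is $\exp(\tau z\theta-\tau\theta^2/2)$. Its phase is
\[
\Im(\tau z)\,\theta-\Im(\tau)\,\theta^2/2,
\]
and it suffices to keep this below $\pi/2$ in absolute value uniformly for $|\theta|\le M_g$; then the real part of the integral is positive. Now write $z=x+iy$ and $\tau=s+it$. On $E_{\rho_1}^{(B_1)}$ we have $|y|\le B_1\delta_1$ and $|x|\le B_1\rho_1\le 2B_1$. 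On the translated ellipse, $|t|\le B_2\delta_2$ and $|s|\le B_2(\tau_0+1)$. Hence
\[
|\Im(\tau z)|\le |s||y|+|t||x|\le B_2(\tau_0+1)B_1\delta_1+2B_1B_2\delta_2 ,
\]
while the quadratic term contributes at most $|t|M_g^2/2\le B_2\delta_2M_g^2/2$. The prescribed values of $\delta_1,\delta_2$ should make each of these pieces at most $\pi/8$. The factor $(4B_1+M_g)$ in $\delta_2$ handles the $t$-terms jointly, and the $(\tau_0+2)$ in $\delta_1$ handles the $s$-term. We then get total phase at most $3\pi/8<\pi/2$, hence $\cos(\text{phase})\ge\cos(3\pi/8)>0$. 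Getting this bookkeeping right is the main obstacle. If the constants in $\delta_1,\delta_2$ are only just sufficient, we will split $\theta$ from $\theta^2$ more carefully, using $|\theta|\le M_g$ throughout.

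For the sup bound, $|\log f_g|\le|\log|f_g||+|\arg f_g|$. The argument part is bounded by $\pi/2+\pi$ from the two principal branches. For the modulus,
\[
\bigl|\exp(-\tau(z-\theta)^2/2)\bigr|=\exp\bigl(-\Re[\tau(z-\theta)^2]/2\bigr),
\]
and $|\tau(z-\theta)^2|\le C_{B_2,\tau_0}(|z|^2+M_g^2)\le C(B_1^2+M_g^2)$. This gives the upper bound $|f_g|\le C\exp\bigl(C(B_1^2+M_g^2)\bigr)$. For the lower bound, combine $\Re\bigl[\int \exp(\tau z\theta-\tau\theta^2/2)\,\dif g\bigr]\ge\cos(3\pi/8)\exp\bigl(-C(B_1M_g+M_g^2)\bigr)$, the bound $|e^{-\tau z^2/2}|\ge e^{-CB_1^2}$, and $|\sqrt{\tau}|\ge\sqrt{B_2(\tau_0-1)/2}$. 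Together these yield $|\log|f_g||\le C_{B_2,\tau_0}(B_1^2+M_g^2)$. Additive constants, such as $\log\cos(3\pi/8)$ and the $\log$ of the $\sqrt{\tau}$ factors, are absorbed using $B_1M_g\ge1$, which gives $B_1^2+M_g^2\ge 2B_1M_g\ge 2$.
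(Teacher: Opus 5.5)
Your route is the paper's: factor $f_g(z|\tau)=\sqrt{\tau/(2\pi)}\,e^{-\tau z^2/2}I_g(z,\tau)$ with $I_g(z,\tau)=\int e^{\tau z\theta-\tau\theta^2/2}\,\dif g(\theta)$, keep the phase $\Im(\tau z)\theta-\Im(\tau)\theta^2/2$ below $\pi/2$, take principal logarithms, and bound moduli. Your phase bookkeeping is fine. The paper groups the two $\Im\tau$ terms as $B_2\delta_2M_g(4B_1+M_g)/2\le\pi/8$ and gets $\pi/4$; your $3\pi/8$ also works. One slip is harmless: $|\Re\tau|\le B_2(\tau_0+1)$ does not follow from the constraint on $\delta_2$. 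The correct bound is $B_2(\tau_0+\sqrt{1+\delta_2^2})\le B_2(\tau_0+2)$, which is exactly what the factor $\tau_0+2$ in $\delta_1$ covers.

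The real error is in the branch and the imaginary part. You assert that analyticity reduces to $\Re\int e^{-\tau(z-\theta)^2/2}\,\dif g>0$, and that $|\arg f_g|\le\pi/2+\pi$ ``from the two principal branches.'' Both claims are false. The full integral equals $e^{-\tau z^2/2}I_g$. The factor $e^{-i\Im(\tau z^2)/2}$ rotates it by an angle that is not small on $\mathcal E(B_1,B_2,\tau_0)$, so it can cross the negative real axis, and its principal logarithm is not analytic there.

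What you actually verified, $\Re I_g>0$, is the right reduction. But it forces the continuation, which is unique since it must be real on the real slice:
\[
\log f_g(z|\tau)=\mathrm{Log}\,I_g(z,\tau)-\frac{\tau z^2}{2}+\frac{\mathrm{Log}\,\tau-\log(2\pi)}{2}.
\]
Its imaginary part contains $-\Im(\tau z^2)/2$, which is not $O(1)$. For instance, take $g=\delta_{M_g}$ and real $\tau$. Then $\Im\log f_g=-\tau(\Re z-M_g)\Im z$, which on $\mathcal E$ can be of order $B_1/M_g$, so it is unbounded as $B_1/M_g\to\infty$.

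The repair is to bound the three terms directly, as the paper does:
\[
|\log f_g|\le\bigl|\log|I_g|\bigr|+\frac{\pi}{2}+\frac{|\tau||z|^2}{2}+\frac{\bigl|\log|\tau|\bigr|+\pi/2+\log(2\pi)}{2}.
\]
Here $|z|\le B_1\rho_1\le 2B_1$ and $B_2(\tau_0-1)/2\le|\tau|\le B_2(\tau_0+2)$. Your own modulus bounds on $I_g$ give $\bigl|\log|I_g|\bigr|\le B_2(\tau_0+2)(2B_1M_g+M_g^2/2)+\log(1/\cos(3\pi/8))$. Everything is then at most $C_{B_2,\tau_0}(B_1^2+M_g^2)$ after using $B_1M_g\ge1$.

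So the lemma survives, but only because the $B_1^2$ budget also absorbs the imaginary part $|\tau||z|^2/2\le 2B_2(\tau_0+2)B_1^2$. It is not true that the argument of $f_g$ is bounded by a constant.
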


\begin{proof}
For $(z,\tau)\in\mathbb{C}\times \big(\mathbb{C}\backslash(-\infty,0]\big)$, write
\[
f_g(z|\tau)\coloneqq \sqrt{\frac{\tau}{2\pi}}\exp\Big(-\frac{\tau z^2}{2}\Big)I_g(z,\tau),
\qquad
I_g(z,\tau)\coloneqq \int \exp\Big(\tau z\theta-\frac{\tau\theta^2}{2}\Big)\,\dif g(\theta).
\]
Moreover,
\[
I_g(z,\tau)
=
\int \exp\bigg(\Re\Big[\tau z\theta-\frac{\tau\theta^2}{2}\Big]\bigg)
\Big\{\cos\big(\alpha(\theta)\big)+i\sin\big(\alpha(\theta)\big)\Big\}\,\dif g(\theta),
\]
where
\[
\alpha(\theta)\coloneqq (\Re\tau)(\Im z)\theta+(\Im\tau)\Big((\Re z)\theta-\frac{\theta^2}{2}\Big).
\]

Now fix $(z,\tau)\in \mathcal{E}(B_1,B_2,\tau_0)$. Then
\[
\begin{aligned}
|\alpha(\theta)|
&\le B_2(\tau_0+\rho_2)B_1\delta_1M_g
   +B_2\delta_2\Big(B_1\rho_1M_g+\frac{M_g^2}{2}\Big)\\
&\le B_2(\tau_0+2)B_1\delta_1M_g
   +B_2\delta_2\Big(2B_1M_g+\frac{M_g^2}{2}\Big)\\
&\le \frac{\pi}{8}+\frac{\pi}{8}
= \frac{\pi}{4}.
\end{aligned}
\]
Hence $\Re[I_g(z,\tau)]>0$, and $I_g(z,\tau)$ stays in the right half-plane. Therefore we may take the principal branch of the logarithm and define
\[
\log f_g(z|\tau)
\coloneqq
\log I_g(z,\tau)-\frac{\tau z^2}{2}+\frac{\log(\tau)-\log(2\pi)}{2},
\]
which is jointly holomorphic on $\mathcal{E}(B_1,B_2,\tau_0)$.

It remains to bound its modulus. For $(z,\tau)\in\mathcal{E}(B_1,B_2,\tau_0)$,
\[
\bigg|\Re\Big[\tau z\theta-\frac{\tau\theta^2}{2}\Big]\bigg|
\le
|\tau|\Big(|z|M_g+\frac{M_g^2}{2}\Big)
\le
B_2(\tau_0+2)\Big(2B_1M_g+\frac{M_g^2}{2}\Big).
\]
Therefore,
\[
\begin{aligned}
|\log f_g(z|\tau)|
&\le
B_2(\tau_0+2)\Big(2B_1M_g+\frac{M_g^2}{2}\Big)
+\log\frac{1}{\cos(\pi/4)}
+\frac{\pi}{2}
+2B_2(\tau_0+2)B_1^2\\
&\quad
+\frac{|\log B_2|+\Big|\log\frac{\tau_0-1}{2}\Big|+\log(\tau_0+2)+\frac{\pi}{2}+\log(2\pi)}{2}\\
&\le
C_{B_2,\tau_0}(B_1^2+M_g^2),
\end{aligned}
\]
where we used $B_1M_g\ge 1$ in the last step. This completes the proof.
\end{proof}

\bigskip

\begin{proof}[Proof of Theorem~\ref{thm:approximateNPMLE-hetero}]
Note that
\[
\supp(P_{J_n})\subseteq \supp(P_n)\subseteq [-|X|_n,|X|_n]\times[1/T_0,T_0].
\]
By Proposition~25 of \citet{lindsay1995mixture}, both $\hat{g}$ and $\hat{g}_{J_n}$ are supported on
\[
[-(|X|_n\land M),\,|X|_n\land M].
\]

Proceeding as in the proof of Theorem~\ref{thm:approximateNPMLE1}, we obtain
\[
\sum_{i=1}^n\log f_{\hat{g}}(X_i|\tau_i)-\sum_{i=1}^n\log f_{\hat{g}_{J_n}}(X_i|\tau_i) 
\le
4n\sup_{g\in\{\hat{g},\hat{g}_{J_n}\}}
\|\log f_g-\mathrm{poly}_{J_n}[\log f_g]\|_{\infty,B},
\]
where $\|\cdot\|_{\infty,B}$ and $\mathrm{poly}_{J_n}[\cdot]$ are defined as in Theorem~\ref{thm:Bernstein-multi}, with the translated Bernstein polyellipse from Lemma~\ref{lem:analytic-log-density2}. For that polyellipse, we take $B_1=|X|_n$ and fix $B_2,\tau_0$ so that
\[
[B_2(\tau_0-1),\,B_2(\tau_0+1)]=[1/T_0,T_0].
\]
Applying \eqref{eq:Bernstein-multi} together with the bounds from Lemma~\ref{lem:analytic-log-density2}, we obtain
\[
\sup_{g\in\{\hat{g},\hat{g}_{J_n}\}}
\|\log f_g-\mathrm{poly}_{J_n}[\log f_g]\|_{\infty,B}
\le
\frac{64C_{B_2,\tau_0}|X|_n^2}{\delta_n^2(1+\delta_n)^{\lfloor J_n/2\rfloor+1}},
\]
where
\[
\delta_n=
\frac{\pi}{8(\tau_0+2)B_2|X|_n(|X|_n\land M)}
\land \frac{1}{2}
\land \sqrt{\Big(\frac{\tau_0+1}{2}\Big)^2-1}.
\]
Therefore, to ensure the desired bound, it suffices to take
\[
J_n=\Big\lceil
\frac{2}{\log(1+\delta_n)}
\log\Big(\frac{256n^{1+\gamma}C_{B_2,\tau_0}|X|_n^2}{\delta_n^2}\Big)\Big\rceil.
\]
The asserted order of $J_n$ now follows immediately.
\end{proof}

\subsection{Auxiliary results}

Our first auxiliary lemma clarifies how scenarios~\ref{(S1)} and~\ref{(S2)} imply the light-tail condition~\ref{(LT)}.

\begin{lemma}\label{lem:tail}
Under scenario~\ref{(S1)}, condition~\ref{(LT)} holds with $\beta_0=1$. Under scenario~\ref{(S2)}, if $g_0$ is sub-Weibull in the sense that there exists $\beta>0$ such that
\[
\int \mathbf{1}\{|\theta|\ge t\}\,\mathrm{d}g_0(\theta)\le \exp(-c_4 t^\beta),
\qquad t\ge c_3,
\]
for some $c_3,c_4>0$, then condition~\ref{(LT)} holds with $\beta_0=\min(\alpha_2\beta,1+\alpha_2)$.
\end{lemma}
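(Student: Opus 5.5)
The plan is to reduce everything to a tail bound for a single observation $X\sim f_{g_0}$ and then take a union bound: $\bP(|X|_n\ge t)\le n\,\bP(|X|\ge t)$. It therefore suffices to show $\bP(|X|\ge t)\le \exp(-b_1 t^{\beta_0})$ for $t\ge b_0$. By symmetry it is enough to treat the upper tail $\bP(X\ge t)$; the lower tail is handled by replacing $x$ with $-x$, i.e. $\theta$ with $-\theta$. The basic tool is the Chernoff bound for the component $p_\theta$. For $\lambda\ge0$ with $\theta+\lambda\in\Theta$,
\[
\bP_\theta(X\ge t)\le \exp\bigl(\kappa(\theta+\lambda)-\kappa(\theta)-\lambda t\bigr),
\]
because the moment generating function of $p_\theta$ is $\exp(\kappa(\theta+\lambda)-\kappa(\theta))$. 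We then integrate this bound against $g_0$.

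Under \ref{(S1)}, take $\lambda=\tau$ for every $\theta\in[-M,M]$. Then $\theta+\tau\in[-M-\tau,M+\tau]\subseteq\Theta$. Since $\kappa$ is convex, hence continuous on this compact interval inside $\Theta$, the quantity $K\coloneqq\sup_{|\theta|\le M}\{\kappa(\theta+\tau)-\kappa(\theta)\}$ is finite. This gives $\bP(X\ge t)\le e^{K}e^{-\tau t}$, which is at most $e^{-\tau t/2}$ once $t\ge 2K/\tau$. So \ref{(LT)} holds with $\beta_0=1$ and $b_1=\tau/2$.

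Under \ref{(S2)}, split according to $|\theta|\le s$ or $|\theta|>s$, with $s$ to be chosen as a function of $t$:
\[
\bP(X\ge t)\le \sup_{|\theta|\le s}\bP_\theta(X\ge t)+g_0(|\theta|>s).
\]
The prior part is at most $\exp(-c_4 s^\beta)$ when $s\ge c_3$. For the component part, use Chernoff with a $\lambda$ optimized against the growth bounds \eqref{eq:CGF}. For $|\theta|\le s$, convexity gives $\kappa(\theta+\lambda)-\kappa(\theta)\le \kappa(\theta+\lambda)-\kappa(0)-\kappa'(0)\theta=\kappa(\theta+\lambda)$, since $\kappa(0)=0$ and $\kappa'(0)=0$ by the mean-zero assumption. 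Hence
\[
\kappa(\theta+\lambda)-\kappa(\theta)\le \kappa(\theta+\lambda)\le c_2(s+\lambda)^{1+1/\alpha_2}+C,
\]
with the constant $C$ covering the region $|\theta+\lambda|<c_0$. Now choose $\lambda=s$ when $s$ is large relative to $t$, or $\lambda\asymp t^{\alpha_2}$ otherwise, to balance $\lambda t$ against $\lambda^{1+1/\alpha_2}$. With $s=\eta t^{\alpha_2}$ for a small $\eta$ and $\lambda\asymp t^{\alpha_2}$, the exponent is $-c\,t^{1+\alpha_2}$. The prior tail then contributes $\exp(-c_4\eta^\beta t^{\alpha_2\beta})$. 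Together these give $\beta_0=\min(\alpha_2\beta,1+\alpha_2)$, with constants absorbed for $t\ge b_0$.

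The main obstacle is the bookkeeping in \ref{(S2)}: the growth bound in \eqref{eq:CGF} holds only for $|\theta|\ge c_0$, so the bounded region must be handled separately. We also need $\eta$ small enough that $c_2(1+\eta)^{1+1/\alpha_2}\lambda^{1+1/\alpha_2}$ stays below $\lambda t/2$ at the chosen $\lambda=\kappa_0 t^{\alpha_2}$. First pick the constant $\kappa_0$ with $c_2 2^{1+1/\alpha_2}\kappa_0^{1/\alpha_2}\le 1/2$, then choose $\eta\le\kappa_0$. That is routine but must be checked.
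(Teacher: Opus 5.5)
Your proposal is correct and follows essentially the same route as the paper: an exponential Markov (Chernoff) bound with parameter $\tau$ under \ref{(S1)}, and under \ref{(S2)} a split of $g_0$ at level $s\asymp t^{\alpha_2}$ together with a Chernoff bound controlled by the upper growth condition on $\kappa$, followed by a union bound over the $n$ observations. The differences are cosmetic. The paper takes the Chernoff parameter equal to the truncation level ($\lambda=s$, i.e.\ $\eta=\kappa_0$ in your notation) and uses $e^{s|x|}\le e^{sx}+e^{-sx}$ to treat both tails at once. You instead use $\kappa(\theta)\ge 0$ (from convexity, $\kappa(0)=0$ and $\kappa'(0)=0$) to drop the $-\kappa(\theta)$ term.
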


\begin{proof}
Under scenario~\ref{(S1)}, for any $t>0$,
\[
\int \ind\{|x|\ge t\} f_{g_0}(x)\,\dif\mu(x)
\le
\exp(-\tau t)\int \exp(\tau|x|)f_{g_0}(x)\,\dif\mu(x).
\]
Moreover,
\[
\begin{aligned}
\int \exp(\tau|x|)p_{\theta}(x)\,\dif\mu(x)
&\le
\int \big(\exp(\tau x)+\exp(-\tau x)\big)p_{\theta}(x)\dif\mu(x)\\
&=
\exp\big(\kappa(\theta+\tau)-\kappa(\theta)\big)
+\exp\big(\kappa(\theta-\tau)-\kappa(\theta)\big),
\end{aligned}
\]
and hence
\[
\int \exp(\tau|x|)f_{g_0}(x)\,\dif\mu(x)
\le
2\exp\Big(2\sup_{|\theta|\le M+\tau}|\kappa(\theta)|\Big).
\]
Therefore,
\[
\int \ind\{|x|\ge t\} f_{g_0}(x)\,\dif\mu(x)
\le
2\exp\Big(2\sup_{|\theta|\le M+\tau}|\kappa(\theta)|-\tau t\Big).
\]
Consequently,
\[
\bP\big(|X|_n\ge t\big)
\le
2n\exp\Big(2\sup_{|\theta|\le M+\tau}|\kappa(\theta)|-\tau t\Big),
\]
which verifies \ref{(LT)} with $\beta_0=1$.

Under scenario~\ref{(S2)}, for any $t>0$ and $s\ge c_3$,
\[
\begin{aligned}
\int \ind\{|x|\ge t\} f_{g_0}(x)\,\dif\mu(x)
&\le
\int \ind\{|\theta|\ge s\}\,\dif g_0(\theta)
+\sup_{|\theta|\le s}\int \ind\{|x|\ge t\}p_\theta(x)\,\dif\mu(x)\\
&\le
\exp(-c_4s^\beta)
+\exp(-st)\sup_{|\theta|\le s}\int \exp(s|x|)p_\theta(x)\,\dif\mu(x)\\
&\le
\exp(-c_4s^\beta)
+2\exp\Big(2\sup_{|\theta|\le 2s}\kappa(\theta)-st\Big)\\
&\le
\exp(-c_4s^\beta)
+2\exp\Big(2c_2(2s\lor c_0)^{1+1/\alpha_2}-st\Big).
\end{aligned}
\]
Taking $s=ct^{\alpha_2}$ for a sufficiently small constant $c>0$, we obtain~\ref{(LT)} with
\[
\beta_0=\min(\alpha_2\beta,1+\alpha_2).
\]
This completes the proof.
\end{proof}

In scenario~\ref{(S1)}, \eqref{eq:bound} holds with $M_n=M$. An analogous result holds under scenario~\ref{(S2)}.

\begin{lemma}\label{lem:scenario2}
Under scenario~\ref{(S2)}, if $|X|_n\ge 1$\footnote{The condition $|X|_n\ge 1$ is imposed only for simplicity of presentation, and will be omitted in subsequent applications of the lemma.}, then \eqref{eq:bound} holds with
\[
M_n=C_{p_0}|X|_n^{\alpha_1},
\]
where $C_{p_0}>0$ is a constant depending only on $p_0$.
\end{lemma}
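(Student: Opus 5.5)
The plan is to use the first-order optimality (KKT/gradient) characterization of the NPMLE together with the growth condition \eqref{eq:CGF}. Let $\hat g$ maximize $\int\log l_g\,\dif P$, where $P$ is either $P_n$ or $P_{J_n}$. Both measures are supported in $[-|X|_n,|X|_n]$: for $P_{J_n}$ this follows from Theorem 3.3.1 of \citet{szeg1939orthogonal}, as used in the proof of Theorem~\ref{thm:approximateNPMLE2}. Lindsay's gradient characterization gives
\[
D(\theta)\coloneqq\int \frac{l_\theta(x)}{l_{\hat g}(x)}\,\dif P(x)\le 1 \quad\text{for all }\theta,
\]
with equality on $\supp(\hat g)$. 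It therefore suffices to show that $D(\theta)<1$ whenever $|\theta|>C_{p_0}|X|_n^{\alpha_1}$. Such a $\theta$ then cannot be a support point of $\hat g$, which yields \eqref{eq:bound} with $M_n=C_{p_0}|X|_n^{\alpha_1}$ for both $\hat g$ and $\hat g_{J_n}$.

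The key steps are as follows.

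\textbf{Step 1 (upper bound on the numerator).} For $|x|\le B\coloneqq|X|_n$ we have
\[
l_\theta(x)=\exp(\theta x-\kappa(\theta))\le \exp\bigl(B|\theta|-c_1|\theta|^{1+1/\alpha_1}\bigr)\quad\text{for }|\theta|\ge c_0 .
\]

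\textbf{Step 2 (lower bound on the denominator).} We need a lower bound on $l_{\hat g}(x)$ for $x$ in the support of $P$. Evaluating $D$ at $\theta=0$, where $l_0\equiv1$, gives
\[
\int 1/l_{\hat g}\,\dif P\le 1 .
\]
This controls the denominator only on average, which is not enough pointwise. A cleaner route is to compare directly: $\hat g$ beats any point mass, and the likelihood equation gives a self-consistency identity. Instead, I would bound $D(\theta)$ through the mean. Because $\kappa$ is convex with $\kappa(0)=0$ and $\kappa'(0)=0$ (the latter since $p_0$ has mean zero), we have $\kappa\ge0$. For every $\vartheta\in\supp(\hat g)$, Jensen's inequality gives
\[
l_{\hat g}(x)\ge \exp\Bigl(x\bar\vartheta-\int\kappa\,\dif\hat g\Bigr),\qquad \bar\vartheta\coloneqq\int\vartheta\,\dif\hat g(\vartheta) .
\]
This still requires a bound on $\int\kappa\,\dif\hat g$, which creates circularity.

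\textbf{Step 2, alternative (preferred).} Take $\theta^*$ to be the largest support point of $\hat g$; the negative side is symmetric. I argue by contradiction. If $\theta^*$ is huge, then the equality $D(\theta^*)=1$ gives
\[
1=\int \frac{l_{\theta^*}}{l_{\hat g}}\,\dif P\le \sup_{|x|\le B}\frac{l_{\theta^*}(x)}{l_{\hat g}(x)} .
\]
Compare this with the point $\theta'\coloneqq\theta^*/2$ (or any moderate $\theta'$). Since $l_{\theta^*}(x)/l_{\theta'}(x)=\exp\bigl((\theta^*-\theta')x-\kappa(\theta^*)+\kappa(\theta')\bigr)$, which is at most $\exp\bigl(B\theta^*-c_1(\theta^*)^{1+1/\alpha_1}+c_2(\theta^*)^{1+1/\alpha_2}\bigr)$, the ratio is tiny once $\theta^*\gg B^{\alpha_1}$ — provided the exponents cooperate. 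This is where the condition $\alpha_1\ge\alpha_2$ matters: with a generic $\theta'$ the upper-bound term can dominate, so I would instead take $\theta'=0$, for which $\kappa(\theta')=0$.

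This leads to the cleaner argument. Let $g'$ be $\hat g$ with the mass on $\{|\vartheta|>K\}$ moved to $0$. Then
\[
l_{g'}(x)-l_{\hat g}(x)=\int_{|\vartheta|>K}\bigl(1-l_\vartheta(x)\bigr)\,\dif\hat g(\vartheta)\ge \hat g(|\vartheta|>K)\Bigl(1-\sup_{|\vartheta|>K,\,|x|\le B}l_\vartheta(x)\Bigr)>0
\]
whenever $K=C_{p_0}B^{\alpha_1}$, because Step 1 makes $\sup_{|\vartheta|>K}l_\vartheta(x)<1$ there. Thus $g'$ strictly increases $l$ pointwise on $[-B,B]$, and hence strictly increases the objective unless $\hat g(|\vartheta|>K)=0$. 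This proves the claim. The condition $|X|_n\ge1$ lets us absorb $c_0$ into $C_{p_0}$ through $K\ge c_0$.

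The main obstacle is the constant bookkeeping in Step 1. We need
\[
B|\vartheta|-c_1|\vartheta|^{1+1/\alpha_1}<0\quad\text{for all }|\vartheta|>K ,
\]
which holds once $|\vartheta|^{1/\alpha_1}>B/c_1$, that is, $K=(B/c_1)^{\alpha_1}\vee c_0$. Using $B\ge1$, this is at most $C_{p_0}B^{\alpha_1}$. The argument applies verbatim to the maximizer over $P_n$ and over $P_{J_n}$, since both are supported in $[-B,B]$.
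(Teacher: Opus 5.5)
Your final mass-transfer argument is correct, but it takes a different route from the paper's.

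\textbf{The paper's route.} It observes that $l_\theta(x)$ is unimodal in $\theta$ with mode $\mu^{-1}(x)$, where $\mu=\kappa'$. Proposition~25 of \citet{lindsay1995mixture} then places $\supp(\hat g)$ and $\supp(\hat g_{J_n})$ inside $[\mu^{-1}(-|X|_n),\mu^{-1}(|X|_n)]$, the range of the componentwise maximizers over the data. Finally it bounds $\mu^{-1}$ via the convexity inequality $\mu(\theta)\ge\kappa(\theta)/\theta\ge c_1\theta^{1/\alpha_1}$.

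\textbf{Your route.} You compare directly with the null component. Since $\kappa(0)=0$, you have $l_0\equiv1$, while every $\vartheta$ with $|\vartheta|>K$ has $l_\vartheta<1$ on $[-B,B]$. Moving that mass to $0\in\Theta_0$ therefore strictly raises $l_g$ at every atom of $P$, and hence raises the objective. This step uses that $P_{J_n}$, like $P_n$, has positive weights.

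\textbf{What each buys.} Your argument is self-contained: it needs no external result and no inversion of $\kappa'$, and it works verbatim for any probability measure on $[-B,B]$. The paper's route gives an intrinsically sharper localization. Convexity and $\kappa(0)=0$ give $\kappa(\vartheta)/\vartheta\le\kappa'(\vartheta)$, so $[\mu^{-1}(-B),\mu^{-1}(B)]$ sits inside the set $\{\vartheta:\kappa(\vartheta)\le|\vartheta|B\}$ to which your argument localizes. Once only the lower bound in \eqref{eq:CGF} is used, however, both give the same threshold $(B/c_1)^{\alpha_1}\vee c_0$.

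\textbf{Two cosmetic fixes.} The exploratory gradient/Jensen material before the ``cleaner argument'' can be deleted. Also, justify $\int_{|\vartheta|>K}(1-l_\vartheta(x))\,\dif\hat g(\vartheta)>0$ by pointwise positivity of the integrand on a set of positive $\hat g$-mass. Do not rely on the factor $1-\sup_{|\vartheta|>K}l_\vartheta(x)$, which can vanish when $K$ sits exactly at the threshold.
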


\begin{proof}
By assumption, $\kappa(\theta)$ is nonnegative and strictly convex on $\mathbb{R}$. Define
\[
\mu(\theta)\coloneqq \kappa'(\theta).
\]
For $\theta\ge c_0$,
\[
c_1|\theta|^{1/\alpha_1}
\le \frac{\kappa(\theta)-\kappa(0)}{\theta}
\le \mu(\theta)
.
\]
Similarly, for $\theta\le -c_0$,
\[
 \mu(\theta)
\le \frac{\kappa(0)-\kappa(\theta)}{-\theta}
\le -\,c_1|\theta|^{1/\alpha_1}.
\]
Hence there exist constants $b_0,b_1>0$, depending only on the constants above, such that
\[
|\mu^{-1}(x)|\le b_1|x|^{\alpha_1},\qquad |x|\ge b_0.
\]

Recalling the argument at the beginning of the proof of Theorem~\ref{thm:approximateNPMLE1}, we have
\[
\mathrm{supp}(P_{J_n})\subseteq \mathrm{conv}\big(\mathrm{supp}(P_n)\big)\subseteq[-|X|_n,|X|_n].
\]
For each $x\in\mathbb{R}$, the likelihood kernel $l_\theta(x)$ is unimodal in $\theta$, with unique mode at $\mu^{-1}(x)$. Therefore, by Proposition 25 of \citet{lindsay1995mixture}, the supports of $\hat{g}$ and $\hat{g}_{J_n}$ are both contained in
\[
\big[\mu^{-1}(-|X|_n),\mu^{-1}(|X|_n)\big].
\]
It follows that \eqref{eq:bound} holds with
\[
M_n=b_1(|X|_n\lor b_0)^{\alpha_1}\le C_{p_0}|X|_n^{\alpha_1},
\]
where $C_{p_0}$ depends only on $b_0$, $b_1$, and $\alpha_1$. This completes the proof.
\end{proof}

\end{document}